\numberwithin{equation}{section}
\def\C{{\mathbb C}}
\def\P{{\mathbb P}}
\def\Q{{\mathbb Q}}
\def\Z{{\mathbb Z}}
\newtheorem{theorem}{Theorem}[section]
\newtheorem{lemma}[theorem]{Lemma}
\newtheorem{proposition}[theorem]{Proposition}
\newtheorem{corollary}[theorem]{Corollary}
\theoremstyle{definition}
\newtheorem{definition}[theorem]{Definition}
\newtheorem{remark}[theorem]{Remark}
\newtheorem{convention and reminder}[theorem]{Convention and Reminder}
\newtheorem{convention and remark}[theorem]{Convention and Remark}
\newtheorem{definition and remark}[theorem]{Definition and Remark}
\newtheorem{reminders and definition}[theorem]{Reminders and Definition}
\newtheorem{notation and remarks}[theorem]{Notation and Remarks}
\newtheorem{notation and remark}[theorem]{Notation and Remark}
\newtheorem{example}[theorem]{Example}
\newtheorem{problem}[theorem]{Problem}
\begin{document}
\title{Non-Jordaness of the automorphism group of the zero-divisor graph of a matrix ring over number rings}
\author{WonTae Hwang
  \and
  Ei Thu Thu Kyaw$^{\dagger}$\footnote{$\dagger$ Corresponding author}
}
\newcommand{\Addresses}{{
  \bigskip
  \footnotesize

  WonTae Hwang, \textsc{Department of Mathematics and Institute of Pure and Applied Mathematics, Jeonbuk National University, Baekje-daero 567, Deokjin-gu,
    Jeonju-si, Jeollabuk-do, South Korea 54896}\par\nopagebreak
  \textit{E-mail address}: \texttt{hwangwon@jbnu.ac.kr} 

\medskip

Ei Thu Thu Kyaw, \textsc{Department of Mathematics, Yonsei University, 50 Yonsei-Ro,
    Seodaemun-Gu, Seoul, South Korea 03722}\par\nopagebreak
  \textit{E-mail address}: \texttt{eithu.k@yonsei.ac.kr} 

}}

\maketitle
\begin{abstract}
  We provide a construction of the induced subgraphs of the zero-divisor graph of $M_2(R)$ for the ring $R$ of algebraic integers of some number fields that are neither complete nor connected, and study the structure of the induced subgraphs explicitly. As an application, we prove that the automorphism group of the zero-divisor graph of $M_2(R)$ is not a Jordan group.
\end{abstract}

\section{Introduction}

Let $S$ be any ring with identity, $Z(S)$ the set of all zero-divisors (either left or right) of $S$, and let $Z(S)^\times = Z(S)\setminus \{0\}$. 
To such an $S$, we can associate the \emph{zero-divisor graph} of $S$, denoted by $\Gamma (S)$, whose vertices are the elements in $Z(S)^{\times}$, and in which for any two distinct vertices $v_1$ and $v_2$ in $\Gamma(S)$, they are adjacent if and only if $v_1 v_2=0$. The notion of the zero-divisor graph of a commutative ring (with identity) $S$ was first given by I. Beck \cite{7}, where he studied the coloring of the zero-divisor graph of $S$, whose vertex set was defined to be $S$ itself rather than the set $Z(S)^{\times}.$ (See also \cite{8} for a continuation of \cite{7}.) A more refined and relevant definition of the zero divisor graph $\Gamma(S)$ of a commutative ring $S$ (with identity) to that of ours appeared in \cite{3} by D. F. Anderson and P. S. Livingston, where they studied the connection between the ring-theoretic property of $S$ and the graph-theoretic property of its zero-divisor graph $\Gamma(S)$ in some extent. We note that the definition of the zero-divisor graph of a commutative ring $S$ in \cite{3} naturally extends to that of non-commutative rings (see \cite{9}). In this regard, the zero-divisor graph of commutative rings has been studied extensively to describe its graph-theoretic structure (including its automorphism group), or to reveal the close relation between its graph-theoretic properties and the ring-theoretic properties of the ring itself. (For instance, see \cite{12}, \cite{10}, \cite{11}, and \cite{13}.) On the other hand, for the case of $S$ being non-commutative, it seems to the authors that many cases that were dealt with involve matrix rings over finite fields, which, in turn, results in finite ambient zero-divisor graphs. (For example, see \cite{14}, \cite{6}, \cite{1}, and \cite{15}.) With somewhat similar but essentially different point of view, in this paper, we are interested in the zero-divisor graph of the matrix ring $M_2(R)$ for the ring $R$ of algebraic integers in a number field $K$ with $[K : \Q] \leq 2.$ Note that $Z(M_2(R))^{\times}$ is infinite so that the zero-divisor graph $\Gamma(M_2(R))$ is an infinite graph. (For a proof, see Corollary \ref{inf cor} below.) Since it is usually not easy to study infinite graphs, we first describe certain induced subgraphs of $\Gamma(M_2(R))$ that are obtained from a curve lying on the Segre quadric surface in a projective space, called the twisted cubic curve, to derive some properties of the ambient graph $\Gamma(M_2(R))$. In this sense, the authors hope that this paper might be regarded as an attempt to use classical algebraic geometry to study the induced subgraphs of certain infinite zero-divisor graphs. 
In this aspect, one of our main results is the following description on the induced subgraphs of $\Gamma(M_2(R))$ that are obtained from the twisted cubic curve.
\begin{theorem}\label{intro main 1}
Let $R$ be the ring of integers of a number field $K=\Q(\sqrt{-d})$ for either $d=0$ or $d>0$ a square-free integer. Consider the following subset
$$S_{\textrm{tc}}(R) = \left\{ \lambda \begin{bmatrix} 1 & t \\ t^2 & t^3 \end{bmatrix} ~|~ \lambda \in R \setminus \{0\}, t \in R \right\} \cup \left\{\begin{bmatrix} 0 & 0 \\ 0 & \lambda \end{bmatrix} ~|~\lambda \in R \setminus \{0\} \right\}$$
 of $M_2(R).$ Then the induced subgraph $\Gamma_{S_{tc}(R)}(M_2(R))$ of $\Gamma(M_2(R))$ whose vertices lie in $S_{tc}(R)$ has an explicit decomposition as a union of its connected components, depending on whether $d \ne 1,3,$ or $d=1,$ or $d=3.$ 
\end{theorem}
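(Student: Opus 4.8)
The plan is to convert graph adjacency into the vanishing of $2\times 2$ matrix products and then to read off the connected components from the arithmetic of the solutions, with the three cases arising from the order of the unit group $R^\times$. First I would note that every matrix in $S_{\mathrm{tc}}(R)$ really is a vertex: since $R$ is an integral domain, a matrix in $M_2(R)$ is a zero-divisor exactly when its determinant vanishes, and each listed matrix has rank $\le 1$ (in $\lambda\begin{bmatrix}1&t\\ t^2&t^3\end{bmatrix}$ the second row is $t^2$ times the first). Writing $M(\lambda,t)=\lambda\begin{bmatrix}1&t\\ t^2&t^3\end{bmatrix}$ and $D(\mu)=\begin{bmatrix}0&0\\ 0&\mu\end{bmatrix}$, the decisive computation is
\[
M(\lambda,s)\,M(\mu,t)=\lambda\mu\,(1+st^{2})\begin{bmatrix}1 & t\\ s^{2} & s^{2}t\end{bmatrix},
\]
whose right-hand factor never vanishes; hence $M(\lambda,s)M(\mu,t)=0$ iff $st^{2}=-1$, and symmetrically $M(\mu,t)M(\lambda,s)=0$ iff $ts^{2}=-1$. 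A direct check also gives that $D(\mu)M(\lambda,t)$ and $M(\lambda,t)D(\mu)$ vanish iff $t=0$, while any product of two diagonal matrices is nonzero.

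Using the undirected convention (an edge joins distinct $v_1,v_2$ when $v_1v_2=0$ or $v_2v_1=0$, as is standard for the zero-divisor graph of a noncommutative ring), I would then extract the adjacency rules. Two twisted-cubic vertices with parameters $s,t$ are adjacent iff $st^{2}=-1$ or $ts^{2}=-1$, and each equation forces both $s$ and $t$ to be units of $R$. This lets me split the vertex set into three mutually non-interacting blocks: (i) the diagonal vertices together with the $t=0$ vertices $\lambda\begin{bmatrix}1&0\\ 0&0\end{bmatrix}$, which form a complete bipartite graph and attach to nothing else; (ii) the vertices $M(\lambda,t)$ with $t$ a nonzero non-unit, which are isolated; and (iii) the \emph{unit block} of vertices $M(\lambda,t)$ with $t\in R^{\times}$. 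The first two blocks have the same description for every $d$, so the entire case distinction lives in the unit block.

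To analyze the unit block I would fix a generator $\zeta$ of the cyclic group $U=R^{\times}$ --- of order $2$ when $d\ne 1,3$, order $4$ when $d=1$, and order $6$ when $d=3$ --- and write $-1=\zeta^{m}$. For parameters $\zeta^{p},\zeta^{q}$ the adjacency condition becomes the pair of congruences $p+2q\equiv m$ or $2p+q\equiv m$ modulo $|U|$, while two vertices of the \emph{same} parameter $\zeta^{p}$ are adjacent precisely when $\zeta^{3p}=-1$ (the case $s=t$). Solving these congruences, grouping vertices by exponent class, and observing that each class is countably infinite (one vertex per $\lambda\in R\setminus\{0\}$) and is internally a clique exactly when $3p\equiv m$, yields the component structure. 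I expect this bookkeeping to be the main obstacle: one must pin down exactly which exponent classes are linked, confirm that no spurious edge crosses between the components one writes down, and verify the self-adjacency criterion. Carrying it out gives, for $d\ne 1,3$, a single complete split graph (one independent class joined completely to one clique class); for $d=3$, three disjoint isomorphic copies of that complete split graph (the pairs of exponent classes $\{0,3\},\{2,5\},\{1,4\}$); and for $d=1$, a single component consisting of one independent ``hub'' class ($\zeta^{0}=1$) joined completely to three classes, of which the $-1$-class is a clique and the $\pm i$-classes are independent, with no other edges.

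Finally I would assemble the pieces: the complete bipartite diagonal-plus-$(t=0)$ component, the countable family of isolated vertices, and the unit block described above. The whole decomposition then depends only on $|R^{\times}|\in\{2,4,6\}$, which is exactly the trichotomy $d\ne 1,3$, $d=1$, $d=3$ of the statement.
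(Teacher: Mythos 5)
Your proposal is correct and follows essentially the same route as the paper: both reduce adjacency in $\Gamma_{S_{\textrm{tc}}(R)}(M_2(R))$ to the unit equations $st^2=-1$ and $ts^2=-1$ (your direct factorization $M(\lambda,s)M(\mu,t)=\lambda\mu(1+st^2)N$ with $N$ a never-vanishing matrix is a cleaner derivation than the paper's detour through the annihilators of Lemma \ref{Stc subset lem} and the check of their membership in $S_{\textrm{tc}}(R)$, but it yields the identical criterion) and then enumerate the solutions via the unit group, i.e., via $|R^{\times}|\in\{2,4,6\}$ as in Corollary \ref{main sol}. Your exponent-congruence bookkeeping in the cyclic group $R^{\times}$ reproduces exactly the components $\Gamma_1$, $\Gamma_2$ (and $\Gamma_3,\Gamma_4$ when $d=3$) together with the isolated vertices listed in Theorem \ref{main thm1}.
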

For a more detailed description on the induced subgraph $\Gamma_{S_{tc}(R)}(M_2(R)),$ see Theorem \ref{main thm1} below. The proof of Theorem \ref{intro main 1} is achieved by a direct computation, together with a fundamental description on the units of imaginary quadratic number rings.
\vskip 0.1in

Now, given a graph $\Gamma$, we are also interested in the structure of the automorphism group $\textrm{Aut}(\Gamma)$ of $\Gamma$. In this regard, the following is another main result of this paper, which essentially tells us that the automorphism group $\textrm{Aut}(\Gamma(M_2(R)))$ is rather ``large" in terms of the Jordan property.
\begin{theorem}\label{intro main 2}
Let $R$ be the ring of integers of a number field $K=\Q(\sqrt{-d})$ for either $d=0$ or $d>0$ a square-free integer, and let $G= \textrm{Aut}(\Gamma(M_2 (R))).$ Then $G$ is not a Jordan group.
\end{theorem}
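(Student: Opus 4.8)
The plan is to show that $G=\textrm{Aut}(\Gamma(M_2(R)))$ contains a copy of the symmetric group $S_n$ for every $n$, which immediately rules out the Jordan property. Recall that $G$ is \emph{Jordan} if there is a constant $C$ such that every finite subgroup $H\le G$ has an abelian normal subgroup of index at most $C$. For $n\ge 5$ the only normal subgroups of $S_n$ are $\{1\}$, $A_n$, and $S_n$, and none but $\{1\}$ is abelian; hence the least index of an abelian normal subgroup of $S_n$ is $n!$. Thus if $S_n\hookrightarrow G$ for all $n$, no finite $C$ can work and $G$ fails to be Jordan. So everything reduces to producing, inside $G$, symmetric groups of unbounded degree.

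To build such subgroups I would exploit \emph{twin vertices}. Call a set $T$ of vertices of $\Gamma(M_2(R))$ a twin class if every vertex outside $T$ is adjacent to either all or none of the elements of $T$, and if the elements of $T$ are pairwise adjacent or pairwise non-adjacent among themselves. For any such $T$, every permutation of $T$ extended by the identity on the remaining vertices preserves adjacency, so $\textrm{Sym}(T)\hookrightarrow G$. It therefore suffices to exhibit an \emph{infinite} twin class, and the natural candidates come from the twisted cubic set $S_{\textrm{tc}}(R)$ of Theorem \ref{main thm1}: fix a rank-one matrix $A=\begin{bmatrix} 1 & t \\ t^2 & t^3\end{bmatrix}$ (or the matrix $\begin{bmatrix}0&0\\0&1\end{bmatrix}$) and set $T=\{\lambda A \mid \lambda\in R\setminus\{0\}\}$. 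Since $\Z\subseteq R$, the set $R\setminus\{0\}$ is infinite, so $T$ is infinite, and each $\lambda A$ is a nonzero rank-one, hence zero-divisor, matrix and thus a genuine vertex (compare Corollary \ref{inf cor}).

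It remains to verify that $T$ is a twin class. Write $A=u\,\eta$ as the product of a nonzero column vector $u$ and a nonzero row vector $\eta$, so that $(\lambda A)B=\lambda\,u\,(\eta B)$ and $B(\lambda A)=\lambda\,(Bu)\,\eta$. Because $R$ is an integral domain, multiplying a matrix by the nonzero scalar $\lambda$ never changes whether the product vanishes; hence for $B\notin T$ one has $(\lambda A)B=0\iff \eta B=0$ and $B(\lambda A)=0\iff Bu=0$, both conditions being independent of $\lambda$. Thus every vertex $B\notin T$ is adjacent to all of $T$ or to none of it. Within $T$, adjacency of $\lambda A$ to $\mu A$ is governed solely by whether $A^2=(\eta u)A$ is zero, i.e. by whether $\eta u=0$, which is the same for every pair. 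Therefore $T$ is an infinite twin class, $\textrm{Sym}(T)\hookrightarrow G$, and in particular $S_n\hookrightarrow G$ for all $n$, completing the argument.

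The step requiring the most care is this twin verification, where one must track both the left and the right annihilators — the zero-divisor graph of the noncommutative ring $M_2(R)$ uses the convention that $v_1$ and $v_2$ are adjacent when $v_1v_2=0$ or $v_2v_1=0$ — and invoke the domain property of $R$ to strip off scalars cleanly. The genuinely substantial input, namely the explicit description of the relevant components and of which matrices occur as their vertices, is already furnished by Theorem \ref{main thm1}, so past that point the non-Jordan conclusion follows from the twin construction above.
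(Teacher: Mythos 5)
Your proposal is correct and takes essentially the same approach as the paper: the paper's proof of Theorem \ref{main thm2} likewise permutes the scalar multiples $\lambda A$ of a fixed rank-one matrix on the twisted cubic (namely $t=-1$, with $\lambda\in\Z$, $|\lambda|\le n$) while fixing all other vertices, the key point in both arguments being that whether a product involving $\lambda A$ vanishes is independent of the nonzero scalar $\lambda$ because $R$ is a domain. The only cosmetic differences are that you package this as one infinite twin class giving $\mathrm{Sym}(T)\hookrightarrow G$ and conclude from the normal subgroup structure of $S_n$ directly, whereas the paper works with the finite clique $K_{2n}$ and passes to the simplicity of $A_{2n}$ together with Lemma \ref{sub jordan lem}.
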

For the definition of Jordan groups, see Definition \ref{Jordan}, and for a proof of Theorem \ref{intro main 2}, see Theorem \ref{main thm2} below. The proof of Theorem \ref{intro main 2} makes use of the observation that the automorphism group $\textrm{Aut}(\Gamma(M_2(R)))$ as above contains the symmetric group $S_n$, whence the alternating group $A_n$ for every $n \geq 1,$ and the well-known fact that the group $A_n$ is simple for any $n \geq 5.$ Theorem \ref{intro main 2} is quite interesting because there are infinite groups that are indeed Jordan groups, which indicates that the proof of Theorem \ref{intro main 2} is not trivial at all. 
\vskip 0.1in

This paper is organized as follows: In Section \ref{preliminary}, we recall some of the facts in the general theory of the zero-divisor graph of a ring (with emphasis on the ring of $(2 \times 2)$ matrices), and the theory of the twisted cubic curve in a projective space, together with some fundamental results on the units of imaginary quadratic number rings. In Section \ref{main}, we obtain several results on the description of certain induced subgraphs of the zero-divisor graph of $M_2(R)$ with $R$ being the ring of integers of a number field, which includes the computations of the girth, chromatic number, clique number, and vertex connectivity of parts of those induced subgraphs (see Theorems \ref{main thm1}, \ref{chromatic thm}, \ref{clique thm}, \ref{connectivity thm} below) using the facts that were introduced in Section \ref{preliminary}. Finally, in Section \ref{aut}, we introduce one group theoretic property (namely, the Jordan property) of arbitrary groups, which somewhat reveals the subgroup structure of the ambient groups, and at the end, we show that the automorphism group of the zero-divisor graph of $M_2(R)$ with $R$ being the ring of integers of certain number fields does not satisfy the Jordan property (see Theorem \ref{main thm2} below).
\vskip 0.1in

Throughout the paper, let $R$ be the ring of integers of $K=\Q(\sqrt{-d})$ either for $d=0$ or $d>0$ a square-free integer, unless otherwise specified. Note that we have $\mathbb{Z} \subseteq R \subseteq \mathbb{C}$ for any such $d$. For an algebraic variety $X$ over the field of complex numbers $\C$, we denote the set of $\C$-points by $X(\C).$ For any set $H,$ $|H|$ denotes the number of elements of $H$, and for an integer $n \geq 1,$ $S_n$ (resp.\ $A_n$) is the symmetric group (resp.\ alternating group) on $n$ letters, and $K_n$ (resp.\ $K_{n,n}$) is a complete graph with $n$ vertices (resp.\ a complete bipartite graph with $2n$ vertices). Also, we denote the zero-divisor graph of a ring $S$ with identity by $\Gamma(S),$ and for a subset $T $ of the set of vertices in $\Gamma(S)$, $\Gamma_T(S)$ denotes the induced subgraph of $\Gamma(S)$ whose vertices lie on $T.$ Finally, for a graph $\Gamma$, we adopt the following notations in the sequel:
\vskip 0.05in
$\bullet$ If $\Gamma$ is a directed graph, then $\Gamma^{\textrm{un}}$ is the underlying undirected graph of $\Gamma.$
\vskip 0.05in
$\bullet$ $V(\Gamma)$ is the set of vertices in $\Gamma$.
\vskip 0.05in
$\bullet$ $E(\Gamma)$ is the set of edges in $\Gamma$.
\vskip 0.05in
$\bullet$ $g(\Gamma)$ is the girth of the graph $\Gamma$. 
\vskip 0.05in
$\bullet$ $\chi(\Gamma)$ is the chromatic number of the graph $\Gamma$. 
\vskip 0.05in
$\bullet$ $\omega(\Gamma)$ is the clique number of the graph $\Gamma$. 
\vskip 0.05in
$\bullet$ $\alpha(\Gamma)$ is the independence number of the graph $\Gamma$. 
\vskip 0.05in
$\bullet$ $\kappa(\Gamma)$ is the vertex connectivity of the graph $\Gamma$.
\vskip 0.05in
$\bullet$ $\textrm{Aut}(\Gamma)$ is the automorphism group of the graph $\Gamma.$ 

\section{Preliminaries}\label{preliminary}
\subsection{Zero-divisor graph of a ring}
In this section, we briefly review the theory of zero-divisor graphs of rings with emphasis on the ring of $(2 \times 2)$ matrices over $R$, where $R$ is the ring of integers of a number field $K=\Q(\sqrt{-d})$ either for $d=0$ or $d>0$ a square-free integer. 
\vskip 0.1in
Let $S$ be any ring with identity, $Z(S)$ be the set of all zero-divisors (either left or right) of $S$, and let $Z(S)^{\times}=Z(S) \setminus \{0\}.$ In this situation, we may introduce a graph that is related to $S$ as follows: let $\Gamma(S)$ be the (directed) graph with the vertex set $V(\Gamma(S))=Z(S)^{\times}$, and with the edge set $E(\Gamma(S))$ given by the rule that for any two distinct vertices $v_1$ and $v_2$ of $\Gamma(S)$, there is an edge from $v_1$ to $v_2$ (denoted by $v_1 \rightarrow v_2$) if and only if $v_1 v_2 =0.$ Also, let $\Gamma^{un}(S)$ denote the underlying undirected graph of $\Gamma(S).$ In particular, we have $V(\Gamma(S))= V(\Gamma^{un}(S))$, and any two distinct vertices $v_1$ and $v_2$ of $\Gamma^{un}(S)$ are adjacent if and only if either $v_1 v_2 = 0$ or $v_2 v_1 = 0.$ Note that if $S$ is commutative, then the graph $\Gamma(S)$ might be regarded as an undirected graph. (In this case, we say that $\Gamma^{un}(S)=\Gamma(S)$.) The following result is on the finiteness of the graph $\Gamma(S)$ for an arbitrary ring $S$, which is essentially a restatement of \cite[Theorem II]{4}.

\begin{lemma}\label{infinite graph lem}
Let $S$ be a ring with identity. If $\Gamma(S)$ is finite with $|V(\Gamma(S))| \geq 1$, then $S$ is finite.
\end{lemma}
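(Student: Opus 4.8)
The plan is to reduce the finiteness of $S$ to the finiteness of two additive subgroups built from a single nonzero zero-divisor, via the counting form of the first isomorphism theorem for abelian groups. Since $\Gamma(S)$ is finite with at least one vertex, the set $Z(S)^\times$ is finite and nonempty; fix a vertex $a \in Z(S)^\times$. By the convention that $Z(S)$ records both left and right zero-divisors, $a$ is either a left or a right zero-divisor, and the two cases are interchanged by the left-right symmetry of the definition, so I would treat only the case in which there exists $b \neq 0$ with $ab = 0$.

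The key observation is that one-sided annihilators of nonzero elements consist entirely of zero-divisors. Writing $\mathrm{ann}_r(x) = \{ s \in S : xs = 0\}$ for the right annihilator, I claim that both $\mathrm{ann}_r(a)$ and $\mathrm{ann}_r(b)$ are finite: any nonzero $s$ with $as = 0$ (resp.\ $bs = 0$) is a zero-divisor because $a \neq 0$ (resp.\ $b \neq 0$), hence lies in the finite set $Z(S)^\times$. Thus both right annihilators are contained in $Z(S)^\times \cup \{0\}$ and are therefore finite.

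Next I would show that the image $bS = \{ bs : s \in S\}$ is also finite. This follows from the inclusion $bS \subseteq \mathrm{ann}_r(a)$: for every $s \in S$ we have $a(bs) = (ab)s = 0$, so $bs \in \mathrm{ann}_r(a)$, and the right-hand side is finite by the previous step. Finally, the additive map $S \to S$ given by $s \mapsto bs$ is a homomorphism of abelian groups with kernel $\mathrm{ann}_r(b)$ and image $bS$, so the first isomorphism theorem yields $S/\mathrm{ann}_r(b) \cong bS$ as abelian groups. Since both $\mathrm{ann}_r(b)$ and $bS$ are finite, we conclude $|S| = |\mathrm{ann}_r(b)| \cdot |bS| < \infty$, as desired.

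I expect the only real subtlety — rather than a genuine obstacle — to be the bookkeeping of the left/right dichotomy: one must match the chosen factorization ($ab=0$ versus $ba=0$) with the correct one-sided annihilators ($\mathrm{ann}_r$ versus $\mathrm{ann}_l$), so that the inclusion $bS \subseteq \mathrm{ann}_r(a)$ (or its mirror $Sb \subseteq \mathrm{ann}_l(a)$) and the homomorphism $s \mapsto bs$ (or $s \mapsto sb$) are set up consistently. Everything else is a direct consequence of the finiteness of $Z(S)^\times$ together with elementary abelian-group counting, and no ring-theoretic hypothesis beyond the existence of a single nonzero zero-divisor is required.
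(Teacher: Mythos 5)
Your proof is correct: the finiteness of $\mathrm{ann}_r(a)$ and $\mathrm{ann}_r(b)$ from $Z(S)^\times$ being finite, the inclusion $bS \subseteq \mathrm{ann}_r(a)$, and the counting $|S| = |\mathrm{ann}_r(b)|\cdot|bS|$ all go through, and the left/right symmetry handles the other case. The paper itself gives no proof and simply cites Koh's Theorem II; your argument is essentially the standard one underlying that reference, so there is nothing to compare beyond noting that you have supplied the details the paper omits.
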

From now on, let $R$ be the ring of integers of a number field $K=\Q(\sqrt{-d})$ for $d=0$ or $d>0$ a square-free integer. Note that $\Z \subseteq R.$ The following result that the zero-divisor graph of $M_2(R)$ is infinite might be proved in various ways, and here, we provide one such proof that makes use of the above lemma.
\begin{corollary}\label{inf cor}
The graph $\Gamma(M_2(R))$ is infinite.
\end{corollary}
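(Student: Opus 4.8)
The plan is to invoke Lemma \ref{infinite graph lem} in its contrapositive form. That lemma asserts that a finite zero-divisor graph with at least one vertex forces the underlying ring to be finite; equivalently, if $S$ is infinite, then either $\Gamma(S)$ is infinite or $\Gamma(S)$ has no vertices at all. Applying this with $S = M_2(R)$, the argument reduces to two routine verifications: that $M_2(R)$ is an infinite ring, and that $\Gamma(M_2(R))$ is nonempty.

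For the first point, I would simply observe that $\Z \subseteq R$, so $R$ is infinite, and hence $M_2(R)$ is infinite as well (it already contains the scalar matrices $\mathrm{diag}(n,n)$ for all $n \in \Z$). For the second point, it suffices to exhibit a single nonzero zero-divisor of $M_2(R)$. I would take the two idempotents $E_{11} = \begin{bmatrix} 1 & 0 \\ 0 & 0 \end{bmatrix}$ and $E_{22} = \begin{bmatrix} 0 & 0 \\ 0 & 1 \end{bmatrix}$; since $E_{11} E_{22} = 0$ with both matrices nonzero, each is a nonzero (left or right) zero-divisor, so that $|V(\Gamma(M_2(R)))| \geq 1$.

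Combining these, the contrapositive of Lemma \ref{infinite graph lem} gives the conclusion: were $\Gamma(M_2(R))$ finite, then since it carries at least one vertex, $M_2(R)$ would have to be finite, contradicting the first point. There is no genuine obstacle here beyond confirming that the hypotheses of the lemma are in force; the only point worth stating explicitly is the nonemptiness condition $|V(\Gamma(M_2(R)))| \geq 1$, which the pair of idempotents above settles at once.
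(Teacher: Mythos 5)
Your proposal is correct and follows essentially the same route as the paper: exhibit the nonzero zero-divisor $\begin{bmatrix} 1 & 0 \\ 0 & 0 \end{bmatrix}$ to ensure $|V(\Gamma(M_2(R)))| \geq 1$, then apply Lemma \ref{infinite graph lem} in contrapositive form using the infinitude of $M_2(R)$ (via $\Z \subseteq R$). The only cosmetic difference is that you spell out the witness $E_{22}$ for the zero-divisor property and the verification that $M_2(R)$ is infinite, which the paper leaves implicit.
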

\begin{proof}
First, it is easy to see that the matrix $\begin{bmatrix} 1 & 0 \\ 0 & 0 \end{bmatrix}$ is a zero-divisor in $M_2(R)$ so that we have $|V(\Gamma(M_2 (R)))| \geq 1.$ Suppose on the contrary that $|V(\Gamma(M_2 (R)))|$ is finite. Then by Lemma \ref{infinite graph lem}, it follows that $M_2(R)$ is also finite, which is absurd. Thus we can conclude that the graph $\Gamma(M_2(R))$ is infinite. 
\end{proof}
\begin{remark}
There is another way to see that the graph $\Gamma(M_2(R))$ is an infinite graph using algebraic geometry. Since $K$ is a field, we first recall that a matrix $A \in M_2(K)$ is a zero-divisor if and only if $\det(A)=0.$ Hence we obtain the following identification of sets
$$Z(M_2(K))=\{A \in M_2(K)~|~\textrm{rank}(A) \leq 1 \}.$$
It follows from this observation that $Z(M_2(K))$ is (the set of $K$-rational points of) a determinantal variety $Y$ of dimension $3$ in $M_2 (K) \cong K^4$ so that $Z(M_2(K))$ is an infinite set. Now, we claim that $Y(R)=Z(M_2(R))$. Indeed, it is clear that $Z(M_2(R)) \subseteq Y(R).$ For the reverse inclusion, since $O = \begin{bmatrix} 0 & 0 \\ 0 & 0 \end{bmatrix} \in Z(M_2(R)),$ let $A= \begin{bmatrix} a & b \\ c & d \end{bmatrix} \ne O \in M_2(R)$ with $\textrm{rank}(A) \leq 1.$ Then by regarding $A$ as an element in $M_2(K),$ since $\textrm{rank}(A) \leq 1,$ there is a $\lambda \in K$ such that $\begin{bmatrix} b \\ d \end{bmatrix} = \lambda \cdot \begin{bmatrix} a \\ c \end{bmatrix}.$ Since $K$ is the field of fractions of $R$, we may write $\lambda = \frac{\alpha}{\beta}$ with $\alpha, \beta \in R ~\textrm{and}~ \beta \ne 0.$ Then note that we have
$$A \cdot \begin{bmatrix} -\alpha & -\alpha \\ \beta & \beta \end{bmatrix} =\begin{bmatrix} a & b \\ c & d \end{bmatrix} \cdot \begin{bmatrix} -\alpha & -\alpha \\ \beta & \beta \end{bmatrix}= \begin{bmatrix} a & \lambda a \\ c & \lambda c \end{bmatrix} \cdot \begin{bmatrix} -\alpha & -\alpha \\ \beta & \beta \end{bmatrix}=O,$$
and hence, $A \in Z(M_2(R))$, which, in turn, implies the desired inclusion $Y(R) \subseteq Z(M_2(R)).$ Then since $Y(\Z) \subseteq Y(R)$, and $Y(\Z)$ is an infinite set, we conclude that $Z(M_2(R))$ is also infinite.

\end{remark}
Since it is rather difficult to deal with the infinite graph $\Gamma(M_2(R))$ as a whole, we consider certain induced subgraphs of $\Gamma(M_2(R))$ coming from some algebraic geometric objects to study some of its properties in later sections. More precisely, we construct the aforementioned induced subgraphs of $\Gamma(M_2(R))$ using a classical curve in the projective $3$-space. (For a detailed description on the curve, refer to Section \ref{curve Segre} below.)

\subsection{Twisted cubic curve on the Segre quadric surface in $\mathbb{P}^3$}\label{curve Segre}
In this section, we summarize some essential ingredients from algebraic geometry following \cite{2}, which will be used for our main result of this paper. We begin with recalling one fundamental example of projective varieties, namely, the projective $n$-space for an integer $n \geq 1$. We work with the field of complex numbers $\C$.
\begin{definition}\label{pn def}
For an integer $n \geq 1,$ a \emph{projective $n$-space over $\C$}, denoted by $\mathbb{P}^n$ (or $\mathbb{P}_{\C}^n$), is defined to be the set of equivalence classes of $(n+1)$-tuples $(a_0, \ldots,a_n)$ of elements of $\C$, not all zero, under the equivalence relation given by 
$$(a_0,\ldots,a_n) \sim (\lambda a_0,\ldots,\lambda a_n)$$
for all $\lambda \in \C^{\times} = \C \setminus \{0\}.$ Elements of $\P^n$ are denoted by $[a_0, a_1, \cdots, a_n]$ for $a_0, a_1, \cdots, a_n \in \C.$
\end{definition} 
In particular, we have $[a_0, a_1, \ldots, a_n] = [b_0, b_1, \ldots, b_n]$ in $\P^n$ if there is a $\lambda \in  \C^{\times}$ such that $b_j= \lambda a_j$ for all $0 \leq j \leq n.$ Also, it is worth noting that $[0, 0, \ldots, 0]$ is \emph{not} a point in $\P^n.$
\vskip 0.1in
Now, we consider the following map of projective varieties
$$ \sigma \colon \mathbb{P}^1 \times \mathbb{P}^1 \rightarrow \mathbb{P}^3$$
given by $\sigma ([X_0, X_1], [Y_0, Y_1]) = [X_0 Y_0, X_0 Y_1, X_1 Y_0, X_1 Y_1 ],$ where $[X_0, X_1]$ and $[Y_0, Y_1]$ are the homogeneous coordinates for the respective $\P^1.$ The image of $\sigma$ in $\P^3$ is of our special interest.
\begin{definition}\label{Segre def}
The image of $\mathbb{P}^1 \times \mathbb{P}^1$ via $\sigma$ is called the \emph{Segre surface in $\mathbb{P}^3$}, and it is denoted by $\Sigma_{1,1}.$ 
\end{definition}
In other words, $\Sigma_{1,1}=\sigma(\mathbb{P}^1 \times \mathbb{P}^1) \subseteq \mathbb{P}^3$.
\begin{remark}\label{Segre def eqn}
If $[Z_0, Z_1, Z_2, Z_3]$ is the homogeneous coordinates for $\mathbb{P}^3,$ then it is rather straightforward to see that the Segre surface $\Sigma_{1,1}$ is the zero locus of the single quadratic polynomial $Z_0 Z_3 - Z_1 Z_2$, and hence, $\Sigma_{1,1}$ is also called the \emph{Segre quadric surface in $\mathbb{P}^3$}. Another interesting observation is that the quadratic polynomial $Z_0 Z_3 - Z_1 Z_2$ can be regarded as the determinant of the $2 \times 2$ matrix $\begin{bmatrix} Z_0 & Z_1 \\ Z_2 & Z_3 \end{bmatrix}$.
\end{remark}
Now, we study one classical and basic curve on the Segre quadric surface $\Sigma_{1,1}$, which plays an important role in the sequel.
\begin{definition}\label{twisted cubic curve def}
Let $\nu \colon \mathbb{P}^1 \rightarrow \mathbb{P}^3$ be the map given by $\nu([X_0, X_1])=[X_0^3, X_0^2 X_1, X_0 X_1^2, X_1^3]$, where $[X_0, X_1]$ is the homogeneous coordinates for $\mathbb{P}^1.$ Then the image $C_{\textrm{tc}}$ of $\mathbb{P}^1$ via $\nu$ is called the \emph{twisted cubic curve}.
\end{definition}
In other words, $C_{\textrm{tc}}=\nu(\P^1) \subseteq \P^3.$ The following fact is about a concrete description of the curve $C_{\textrm{tc}}.$
\begin{lemma}\label{tc lem}
We have
$$C_{\textrm{tc}}(\mathbb{C}) = \{[1,t,t^2, t^3]~|~t \in \mathbb{C}\} \cup \{[0,0,0,1]\}.$$
\end{lemma}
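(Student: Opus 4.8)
The plan is to prove the asserted set equality by treating $C_{\textrm{tc}}(\C)$ as the set of $\C$-points of a variety and pinning it down from the quadratic relations it must satisfy, so that both inclusions of the claimed equality — in particular that every point obeying the relations comes from the parametrization $\nu$ — are established. First I would record the relevant equations. Writing $[Z_0,Z_1,Z_2,Z_3]$ for the homogeneous coordinates on $\P^3$, consider the three $2\times 2$ minors of $\begin{bmatrix} Z_0 & Z_1 & Z_2 \\ Z_1 & Z_2 & Z_3 \end{bmatrix}$, namely
$$Q_1 = Z_0 Z_2 - Z_1^2,\qquad Q_2 = Z_0 Z_3 - Z_1 Z_2,\qquad Q_3 = Z_1 Z_3 - Z_2^2,$$
and let $W \subseteq \P^3$ denote their common zero locus. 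Note that $Q_2$ is exactly the Segre quadric of Remark \ref{Segre def eqn}, so this is compatible with $C_{\textrm{tc}}$ lying on $\Sigma_{1,1}$.

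The forward inclusion $C_{\textrm{tc}}(\C) = \nu(\P^1) \subseteq W$ is routine: substituting $\nu([X_0,X_1]) = [X_0^3, X_0^2 X_1, X_0 X_1^2, X_1^3]$ into each $Q_i$ produces $0$ after cancellation, so every image point satisfies all three relations.

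The substance of the argument is the reverse direction $W \subseteq \{[1,t,t^2,t^3] \mid t \in \C\} \cup \{[0,0,0,1]\}$, which is precisely the statement that every solution of the quadric equations arises from the parametrization. I would argue by case analysis on $Z_0$. If $Z_0 \neq 0$, normalize $Z_0 = 1$; then $Q_1 = 0$ forces $Z_2 = Z_1^2$ and $Q_2 = 0$ forces $Z_3 = Z_1 Z_2 = Z_1^3$ (with $Q_3 = 0$ then automatic), so the point equals $[1, t, t^2, t^3]$ for $t = Z_1$. If $Z_0 = 0$, then $Q_1 = 0$ gives $Z_1^2 = 0$, hence $Z_1 = 0$, and $Q_3 = 0$ gives $Z_2^2 = 0$, hence $Z_2 = 0$; since the coordinates are not all zero we must have $Z_3 \neq 0$, so the point equals $[0,0,0,1]$.

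Finally I would close the loop. Each point on the right-hand side is visibly in the image, since $[1,t,t^2,t^3] = \nu([1,t])$ and $[0,0,0,1] = \nu([0,1])$, giving $\{[1,t,t^2,t^3]\} \cup \{[0,0,0,1]\} \subseteq C_{\textrm{tc}}(\C)$. Chaining the three inclusions $C_{\textrm{tc}}(\C) \subseteq W \subseteq \{[1,t,t^2,t^3]\} \cup \{[0,0,0,1]\} \subseteq C_{\textrm{tc}}(\C)$ forces equality throughout, which proves the lemma and, as a byproduct, identifies $C_{\textrm{tc}}$ with the variety $W$ cut out by the three minors. The main obstacle is the reverse inclusion: one must carry out the case split carefully and, in particular, verify that the degenerate locus $Z_0 = 0$ contributes exactly the single extra point $[0,0,0,1]$ rather than being silently dropped; once the relations $Q_1, Q_2, Q_3$ are in hand, the remaining manipulations are elementary.
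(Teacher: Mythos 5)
Your proof is correct, but it takes a genuinely different route from the paper's. The paper argues directly from the definition of $C_{\textrm{tc}}$ as the image of $\nu$: a point of $C_{\textrm{tc}}(\C)$ equals $[x_0^3, x_0^2x_1, x_0x_1^2, x_1^3]$ for some $[x_0,x_1]\in\P^1$, and the case split $x_0=0$ versus $x_0\neq 0$ (dividing by $x_0^3$ and setting $t=x_1/x_0$) immediately produces the two pieces of the right-hand side; the reverse inclusion is the same observation you make, namely $[1,t,t^2,t^3]=\nu([1,t])$ and $[0,0,0,1]=\nu([0,1])$. You instead sandwich the curve inside the determinantal locus $W$ cut out by the three minors $Q_1,Q_2,Q_3$, performing the case split on the coordinate $Z_0$ of an arbitrary point of $W$ rather than on the parameter $x_0$. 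Both arguments are elementary and of comparable length; the paper's buys brevity and never needs to know the defining equations of the curve, while yours proves strictly more: the chain $C_{\textrm{tc}}(\C)\subseteq W\subseteq \{[1,t,t^2,t^3]\mid t\in\C\}\cup\{[0,0,0,1]\}\subseteq C_{\textrm{tc}}(\C)$ identifies $C_{\textrm{tc}}$ set-theoretically with the common zero locus of the three quadrics, which subsumes Lemma \ref{tc on Segre lem} as a byproduct (your $Q_2$ is exactly the Segre quadric of Remark \ref{Segre def eqn}) and recovers the classical description of the twisted cubic as an intersection of quadrics. The one delicate point in your route --- that the locus $Z_0=0$ inside $W$ is exactly the single point $[0,0,0,1]$ --- is handled correctly via $Q_1$ and $Q_3$, so there is no gap.
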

\begin{proof}
Let $P=[z_0, z_1, z_2, z_3] \in C_{\textrm{tc}}(\C).$ Then by definition, there exists a point $[x_0, x_1] \in \P^1(\C)$ such that
$$[z_0, z_1, z_2, z_3]=[x_0^3, x_0^2 x_1, x_0 x_1^2, x_1^3].$$
Now, we consider the following two cases. 
\vskip 0.1in
(i) If $x_0 = 0,$ then we have $x_1 \ne 0$ (because $[x_0, x_1] \in \P^1$) and
$$[z_0, z_1, z_2, z_3]=[0, 0, 0, x_1^3] = [0, 0, 0, 1].$$

(ii) If $x_0 \ne 0,$ then we have
$$[z_0, z_1, z_2, z_3]=[x_0^3, x_0^2 x_1, x_0 x_1^2, x_1^3]=[1, x_1/x_0, (x_1/x_0)^2, (x_1/x_0)^3]$$
(by dividing each homogeneous coordinate by $x_0^3$). By letting $t = x_1/x_0,$ we see that
$$[z_0, z_1, z_2, z_3]=[1,t,t^2,t^3]$$
for some $t \in \C.$ Hence, from (i) and (ii), we can conclude that 
$$P \in \{[1,t,t^2, t^3]~|~t \in \mathbb{C}\} \cup \{[0,0,0,1]\},$$
which shows that $C_{\textrm{tc}}(\C) \subseteq \{[1,t,t^2, t^3]~|~t \in \mathbb{C}\} \cup \{[0,0,0,1]\}.$
\vskip 0.1in
The opposite inclusion also follows similarly from the observation that
$$[1,t,t^2,t^3]=\nu([1,t])~~\textrm{and}~~[0,0,0,1]=\nu([0,1]).$$
This completes the proof.
\end{proof}

A useful observation is that the twisted cubic curve lies on the Segre quadric surface as indicated above.
\begin{lemma}\label{tc on Segre lem}
The curve $C_{\textrm{tc}}$ is contained in the Segre quadric surface $\Sigma_{1,1}.$
\end{lemma}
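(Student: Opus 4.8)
The plan is to combine the two explicit descriptions already at our disposal: the defining equation of the Segre quadric and the point-set description of the twisted cubic. By Remark \ref{Segre def eqn}, the surface $\Sigma_{1,1} \subseteq \P^3$ is precisely the zero locus of the single quadratic form $Z_0 Z_3 - Z_1 Z_2$, where $[Z_0, Z_1, Z_2, Z_3]$ are homogeneous coordinates on $\P^3$. Hence, to prove the containment $C_{\textrm{tc}} \subseteq \Sigma_{1,1}$, it suffices to verify that every point of $C_{\textrm{tc}}$ is annihilated by this quadratic form.

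First I would invoke Lemma \ref{tc lem}, which tells us that
$$C_{\textrm{tc}}(\C) = \{[1,t,t^2,t^3] \mid t \in \C\} \cup \{[0,0,0,1]\},$$
and then substitute each of the two kinds of points into $Z_0 Z_3 - Z_1 Z_2$. For a point $[1, t, t^2, t^3]$ the form evaluates to $1 \cdot t^3 - t \cdot t^2 = 0$, and for $[0,0,0,1]$ it evaluates to $0 \cdot 1 - 0 \cdot 0 = 0$. Since $Z_0 Z_3 - Z_1 Z_2$ is homogeneous of degree two, its vanishing is independent of the chosen homogeneous representative, so this computation shows that every point of $C_{\textrm{tc}}(\C)$ lies on $\Sigma_{1,1}$, which is exactly the assertion.

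An equivalent and slightly more conceptual route bypasses Lemma \ref{tc lem} entirely and works directly from the parametrization $\nu$ of Definition \ref{twisted cubic curve def}. For an arbitrary $[X_0, X_1] \in \P^1$, the image $\nu([X_0, X_1]) = [X_0^3, X_0^2 X_1, X_0 X_1^2, X_1^3]$ satisfies
$$Z_0 Z_3 - Z_1 Z_2 = X_0^3 X_1^3 - (X_0^2 X_1)(X_0 X_1^2) = X_0^3 X_1^3 - X_0^3 X_1^3 = 0,$$
so the image of every point of $\P^1$ automatically lies on the Segre quadric. I expect no genuine obstacle in this proof: the claim collapses to a one-line polynomial identity, and the only point requiring a moment of care is the well-posedness of checking the equation on homogeneous coordinates, which is guaranteed precisely because the Segre form is homogeneous.
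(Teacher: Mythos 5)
Your proposal is correct, and your second, ``more conceptual'' route is exactly the paper's proof: substitute the parametrization $\nu([X_0,X_1])=[X_0^3,X_0^2X_1,X_0X_1^2,X_1^3]$ into $Z_0Z_3-Z_1Z_2$ and observe it vanishes identically. The first variant via Lemma \ref{tc lem} is a trivially equivalent case-check, so there is nothing substantively different to compare.
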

\begin{proof}
In view of Remark \ref{Segre def eqn}, it suffices to check that if $P=[X_0^3, X_0^2 X_1, X_0 X_1^2, X_1^3]$ for some $[X_0, X_1]\in \P^1,$ then it satisfies the defining polynomial equation of $\Sigma_{1,1},$ which is straightforward because we have $(X_0^3 X_1^3) - (X_0^2 X_1 \cdot X_0 X_1^2) =0.$
\end{proof}


\subsection{Units in a quadratic number ring}
In this subsection, we recall the basics of the group of units in a quadratic number ring. 
\vskip 0.1in
Throughout this section, let $K=\Q(\sqrt{-d})$ (for some square-free integer $d > 0$) be an imaginary quadratic number field, and let $R=\mathcal{O}_K$ be the ring of integers of $K.$ Obviously, $R$ is an integral domain and the following fact about $R$ is well known.
\begin{proposition}
We have
$$R = \begin{cases} \Z[\sqrt{-d}] & \textit{if~$d \equiv 1, 2 \pmod{4}$} \\ \Z \left[\frac{-1+\sqrt{-d}}{2} \right] & \textit{if~$d \equiv 3 \pmod{4}$} \end{cases}.$$ 
\end{proposition}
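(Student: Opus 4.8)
The plan is to determine $\mathcal{O}_K$ directly, by characterizing exactly which elements of $K = \Q(\sqrt{-d})$ are algebraic integers. First I would recall the standard criterion for quadratic fields: an irrational element $\alpha \in K$, written uniquely as $\alpha = a + b\sqrt{-d}$ with $a, b \in \Q$ and $b \neq 0$, is an algebraic integer if and only if its minimal polynomial $x^2 - 2a x + (a^2 + d b^2)$ over $\Q$ has integer coefficients, i.e.\ if and only if both the trace $2a$ and the norm $a^2 + d b^2$ lie in $\Z$. (When $b = 0$ one has $\alpha \in \Q$, which is an algebraic integer precisely when $\alpha \in \Z$, and $\Z$ is already contained in both of the proposed rings.)

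From the trace condition $2a \in \Z$ I would write $a = m/2$ with $m \in \Z$, and then show that the norm condition forces $b$ to have denominator dividing $2$ as well. Writing $b = p/q$ in lowest terms, the relation $4 d b^2 = 4(a^2 + d b^2) - m^2 \in \Z$ gives $q^2 \mid 4 d p^2$, hence $q^2 \mid 4d$ since $\gcd(p,q) = 1$. Because $d$ is square-free (so no odd prime squared divides $d$, and $4 \nmid d$), this forces $q \mid 2$; here is the one place where square-freeness of $d$ is genuinely used. Thus $b = n/2$ with $n \in \Z$, and substituting $a = m/2$, $b = n/2$ the norm condition collapses to the single congruence $m^2 + d n^2 \equiv 0 \pmod{4}$.

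The remaining step is a short case analysis, using that a square is $\equiv 0$ or $1 \pmod 4$ according as its base is even or odd, together with the fact that a positive square-free $d$ satisfies $d \equiv 1, 2,$ or $3 \pmod 4$ (never $0$). When $d \equiv 1$ or $2 \pmod 4$ I would check that $m^2 + d n^2 \equiv 0 \pmod 4$ forces both $m$ and $n$ even, whence $a, b \in \Z$ and $\mathcal{O}_K = \Z[\sqrt{-d}]$. When $d \equiv 3 \pmod 4$ the congruence holds exactly when $m \equiv n \pmod 2$, so $\mathcal{O}_K$ is the set of all $(m + n\sqrt{-d})/2$ with $m, n$ of equal parity; recognizing $\frac{-1 + \sqrt{-d}}{2}$ (the case $m = -1$, $n = 1$) as a generator then identifies this set with $\Z\!\left[\frac{-1+\sqrt{-d}}{2}\right]$.

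I expect the only non-formal point to be the denominator bound on $b$: one must invoke square-freeness of $d$ to exclude larger denominators, and it is cleanest to treat the prime $2$ separately (where $4 \mid 4d$ is automatic) from odd primes $\ell$ (where $\ell^2 \mid 4d$ would force $\ell^2 \mid d$, contradicting square-freeness). Everything else is routine bookkeeping with parities modulo $4$.
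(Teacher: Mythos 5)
Your argument is correct and complete: the trace--norm criterion via the minimal polynomial, the denominator bound on $b$ from square-freeness of $d$, and the parity analysis of $m^2+dn^2 \pmod 4$ together constitute the standard proof of this classical fact. The paper itself offers no proof --- it records the proposition as well known --- so there is nothing to compare against; your write-up would serve as a correct proof if one were to be included.
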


Now, we recall that a \emph{unit} in $R$ is an element $x \in R$ such that there is an element $y \in R$ with $xy = yx =1.$ If we try to find the set $R^{\times}$ of all the units in $R,$ then we obtain another well-known result.
\begin{proposition}\label{unit prop}
We have
$$R^{\times} = \begin{cases} \{\pm 1\} & \textit{if~$d \ne 1, 3$} \\ \{\pm 1, \pm i  \}& \textit{if~$d =1$} \\ \{\pm 1, \pm \omega, \pm \omega^2\}  & \textit{if~$d =3$} \end{cases}$$
where $i=\sqrt{-1}$ and $\omega$ is a primitive third root of unity. 
\end{proposition}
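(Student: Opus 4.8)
The plan is to exploit the field norm $N \colon K \to \Q$, defined by $N(\alpha) = \alpha \bar{\alpha}$ where $\bar{\alpha}$ is the complex conjugate of $\alpha$, and thereby reduce the determination of $R^{\times}$ to solving a single quadratic Diophantine equation for each residue class of $d$ modulo $4$. First I would record the two standard properties that do the real work: $N$ is multiplicative, and it restricts to a map $R \to \Z$, so that $N(\alpha) \in \Z$ for every $\alpha \in R$. Because $K \subseteq \C$ is imaginary quadratic, one moreover has $N(\alpha) = \alpha \bar{\alpha} = |\alpha|^2 \geq 0$, with equality only at $\alpha = 0$.

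From these properties I would deduce the key equivalence: an element $\alpha \in R \setminus \{0\}$ lies in $R^{\times}$ if and only if $N(\alpha) = 1$. Indeed, if $\alpha\beta = 1$ for some $\beta \in R$, then $N(\alpha) N(\beta) = N(1) = 1$ forces the nonnegative integer $N(\alpha)$ to equal $1$; conversely, if $N(\alpha) = \alpha\bar{\alpha} = 1$, then $\bar{\alpha} \in R$ is a two-sided inverse of $\alpha$. This converts the problem into listing all $\alpha \in R$ with $|\alpha|^2 = 1$.

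Next I would split into the two shapes of $R$ furnished by the preceding Proposition. When $d \equiv 1, 2 \pmod 4$, a general element is $\alpha = a + b\sqrt{-d}$ with $a, b \in \Z$, so $N(\alpha) = a^2 + d b^2$ and the equation to solve is $a^2 + d b^2 = 1$. When $d \equiv 3 \pmod 4$, a general element is $\alpha = \tfrac{1}{2}(a + b\sqrt{-d})$ with $a \equiv b \pmod 2$, so $N(\alpha) = \tfrac{1}{4}(a^2 + d b^2)$ and the equation becomes $a^2 + d b^2 = 4$ subject to that parity constraint. In each equation the point is that as soon as $b \neq 0$, the term $d b^2$ is too large to be compensated unless $d$ is small: this isolates the exceptional values $d = 1$ (where $a^2 + b^2 = 1$ admits the extra solutions $a = 0$, $b = \pm 1$, giving $\pm i$) and $d = 3$ (where $a^2 + 3b^2 = 4$ admits $a = \pm 1$, $b = \pm 1$, giving the four further units $\tfrac{1}{2}(\pm 1 \pm \sqrt{-3})$). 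For every other admissible $d$ one is forced to $b = 0$, leaving only $a = \pm 1$ and hence $R^{\times} = \{\pm 1\}$.

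The only genuine care needed is the bookkeeping in the case $d = 3$: I would check that the four solutions $\tfrac{1}{2}(\pm 1 \pm \sqrt{-3})$, together with $\pm 1$, are precisely the six sixth roots of unity, and identify them with $\{\pm 1, \pm \omega, \pm \omega^2\}$ for $\omega = \tfrac{1}{2}(-1 + \sqrt{-3})$ a primitive cube root of unity (so that $\omega^2 = \tfrac{1}{2}(-1 - \sqrt{-3})$, $-\omega = \tfrac{1}{2}(1 - \sqrt{-3})$, and $-\omega^2 = \tfrac{1}{2}(1 + \sqrt{-3})$). This is a direct verification, so I expect no serious obstacle; once the norm equivalence is established the whole argument is a bounded finite search.
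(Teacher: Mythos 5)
Your argument is correct: the reduction of invertibility to the norm equation $N(\alpha)=1$ (using that $N(\alpha)=|\alpha|^2$ is a nonnegative integer on $R$ and that $\bar{\alpha}\in R$), followed by the finite search in $a^2+db^2=1$ or $a^2+db^2=4$ according to the residue of $d$ modulo $4$, is the standard proof and your case bookkeeping for $d=1$ and $d=3$ is accurate. The paper itself offers no proof of this proposition, citing it as a well-known fact about imaginary quadratic number rings, so there is nothing to compare against; your write-up would serve as a complete justification.
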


In particular, as an immediate consequence of Proposition \ref{unit prop}, we get the following observation, which will be used later in the paper.
\begin{corollary}\label{main sol}
Let $x, y \in R$ such that $x^2 y = -1 $ (resp.\ $xy^2 =-1$). Then the pair $(x,y)$ belongs to the following list: 
\vskip 0.1in
(a) If $d \ne 1,3$, then $(x,y) \in \{(1,-1), (-1,-1)\}$ (resp.\ $(x,y) \in \{(-1, 1), (-1, -1)\}).$
\vskip 0.1in
(b) If $d = 1$, then $(x,y) \in \{(1, -1), (-1, -1), (i, 1), (-i, 1)\}$ (resp.\ $(x,y) \in \{(-1, 1), (-1, -1), (1, i), (1, -i)\}$) where $i=\sqrt{-1}$.
\vskip 0.1in
(c) If $d=3,$ then $(x,y) \in \{(1, -1), (-1, -1), (\omega, -\omega), (-\omega, -\omega), (\omega^2, -\omega^2), (-\omega^2, -\omega^2) \}$ (resp.\ $(x,y) \in \{(-1, 1), (-1, -1), (-\omega, \omega), (-\omega, -\omega), (-\omega^2, \omega^2), (-\omega^2, -\omega^2) \}$) where $\omega$ is a primitive third root of unity.
 
\end{corollary}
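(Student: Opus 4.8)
The plan is to reduce the statement to a finite enumeration by exploiting the explicit description of the unit group $R^{\times}$ given in Proposition \ref{unit prop}. The first and decisive step is to observe that the hypothesis $x^2 y = -1$ already forces both $x$ and $y$ to be units. Indeed, rewriting the equation as $x \cdot (-(xy)) = 1$ shows that $x \in R^{\times}$, and rewriting it as $y \cdot (-x^2) = 1$ shows that $y \in R^{\times}$; the identical manipulation handles the equation $xy^2 = -1$. Thus the ostensibly unbounded search over $x, y \in R$ collapses to a search over the finite set $R^{\times}$.

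With $x, y \in R^{\times}$ in hand, the second step is a direct substitution using the three cases of Proposition \ref{unit prop}: $R^{\times} = \{\pm 1\}$ when $d \neq 1,3$, $R^{\times} = \{\pm 1, \pm i\}$ when $d = 1$, and $R^{\times} = \{\pm 1, \pm \omega, \pm \omega^2\}$ when $d = 3$. For the equation $x^2 y = -1$, I would fix each admissible value of $x$, compute $x^2$, and note that $y = -x^{-2}$ is then uniquely determined and automatically a unit; reading off the resulting pairs $(x,y)$ reproduces exactly the lists in (a), (b), and (c). For example, when $d = 3$ and $x = \omega$ one gets $x^2 = \omega^2$ and hence $y = -\omega^{-2} = -\omega$ (using $\omega^3 = 1$), yielding the pair $(\omega, -\omega)$, and similarly for the remaining five units.

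Finally, the ``resp.'' assertion concerning $xy^2 = -1$ should not be re-derived independently. Since $xy^2 = -1$ is obtained from $x^2 y = -1$ by interchanging $x$ and $y$, the pair $(x,y)$ solves $xy^2 = -1$ if and only if $(y,x)$ solves $y^2 x = -1$; hence each ``resp.'' list is precisely the coordinate-swap of the corresponding principal list, which one checks agrees with the stated pairs. The case $d = 0$ (where $R = \Z$) is subsumed under (a), since $\Z^{\times} = \{\pm 1\}$. There is no genuine obstacle here: the only point demanding a little care is the arithmetic with roots of unity in case (c), where one must remember $\omega^{-1} = \omega^2$, and the whole argument ultimately rests on the finiteness of $R^{\times}$ recorded in Proposition \ref{unit prop}.
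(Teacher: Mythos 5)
Your proof is correct and follows essentially the route the paper intends: the paper states this corollary as an ``immediate consequence'' of Proposition \ref{unit prop} without writing out the details, and your argument supplies exactly those details --- first forcing $x,y \in R^{\times}$ from the equation, then enumerating over the finite unit group and handling the ``resp.'' case by the coordinate swap. All the computed pairs check out, so nothing further is needed.
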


\section{An induced subgraph of the zero-divisor graph of $M_2 (R)$ that comes from the twisted cubic curve}\label{main}
Let $K=\Q(\sqrt{-d})$ either for $d=0$ or a square-free integer $d>0$, and let $R=\mathcal{O}_K$, the ring of integers of $K$. In this section, we describe an induced subgraph of the zero-divisor graph of the $2 \times 2$ matrix ring over $R.$ To this aim, we first consider the following map
$$\varphi \colon  M_2(\mathbb{C}) \cong \mathbb{C}^4 \hookrightarrow \mathbb{P}^4 (\C) \rightarrow \mathbb{P}^3 (\C),~~~ \begin{bmatrix} a & b \\ c & d \end{bmatrix} \mapsto [a, b, c, d, 1] \mapsto [a, b, c, d]. $$
Recall that the twisted cubic curve $C_{tc}$ lies on the Segre quadric surface $\Sigma_{1,1} \subseteq \mathbb{P}^3$, and that we have $C_{\textrm{tc}}(\mathbb{C})=\{[1,t,t^2, t^3]~|~t \in \mathbb{C}\} \cup \{[0,0,0,1]\}$ (see Lemma \ref{tc lem}).



Now, let $$S_{\textrm{tc}} (\C)= \varphi^{-1}(C_{\textrm{tc}}(\mathbb{C})) = \left\{ \lambda \begin{bmatrix} 1 & t \\ t^2 & t^3 \end{bmatrix} ~|~ \lambda \in \mathbb{C}^{\times}, t \in \mathbb{C} \right\} \cup \left\{\begin{bmatrix} 0 & 0 \\ 0 & \lambda \end{bmatrix} ~|~\lambda \in \mathbb{C}^{\times} \right\},$$ 
and similarly, we let
$$S_{\textrm{tc}}(R) = \left\{ \lambda \begin{bmatrix} 1 & t \\ t^2 & t^3 \end{bmatrix} ~|~ \lambda \in R \setminus \{0\}, t \in R \right\} \cup \left\{\begin{bmatrix} 0 & 0 \\ 0 & \lambda \end{bmatrix} ~|~\lambda \in R \setminus \{0\} \right\}.$$
Clearly, we have $S_{\textrm{tc}}(R) \subseteq S_{\textrm{tc}}(\C) \cap M_2 (R).$
For ease of notation, we also let
\begin{equation}\label{eqn2}
S_{\textrm{tc},1}(R)= \left\{ \lambda \begin{bmatrix} 1 & t \\ t^2 & t^3 \end{bmatrix} ~|~ \lambda \in R \setminus \{0\}, t \in R \right\}, ~~S_{\textrm{tc}, 2}(R) = \left\{\begin{bmatrix} 0 & 0 \\ 0 & \lambda \end{bmatrix} ~|~\lambda \in R \setminus \{0\} \right\}, ~~\textrm{and} 
\end{equation}
\begin{equation}\label{eqn1}
S_{\textrm{tc}, 1, t}(R) = \left \{\lambda \begin{bmatrix} 1 & t \\ t^2 & t^3 \end{bmatrix} ~|~ \lambda \in R \setminus \{0\} \right\}~\textrm{for a fixed $t \in R$}.
\end{equation} 
With these notations, it is straightforward to see that
$$S_{\textrm{tc}}(R)=S_{\textrm{tc},1}(R) \cup S_{\textrm{tc}, 2}(R)~~\textrm{and}~~S_{\textrm{tc},1}(R)=\bigcup_{t \in R} S_{\textrm{tc},1,t}(R).$$

The following fact is standard, and we give a rather computational proof for later use. 
\begin{lemma}\label{Stc subset lem}
We have $S_{\textrm{tc}}(R) \subseteq Z(M_2(R)).$
\end{lemma}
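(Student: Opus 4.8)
The plan is to certify that each matrix in $S_{\textrm{tc}}(R)$ is a zero-divisor by exhibiting, for each one, an explicit nonzero matrix in $M_2(R)$ that it annihilates on one side. Since $S_{\textrm{tc}}(R) = S_{\textrm{tc},1}(R) \cup S_{\textrm{tc},2}(R)$, I would split the verification into the two corresponding cases, recording the annihilating matrices explicitly because they will be reused in the finer analysis of the induced subgraph later on.

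For the first family, take $A = \lambda \begin{bmatrix} 1 & t \\ t^2 & t^3 \end{bmatrix}$ with $\lambda \in R \setminus \{0\}$ and $t \in R$. The key observation is the rank-one factorization $A = \lambda \begin{bmatrix} 1 \\ t^2 \end{bmatrix} \begin{bmatrix} 1 & t \end{bmatrix}$, which shows that the right kernel of $A$ is spanned by $\begin{bmatrix} -t \\ 1 \end{bmatrix}$. I would therefore set $B = \begin{bmatrix} -t & 0 \\ 1 & 0 \end{bmatrix}$ and compute $A B = O$ directly (the $(1,1)$ and $(2,1)$ entries are $-t + t = 0$ and $-t^3 + t^3 = 0$, the rest being zero). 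Since the entries $-t$ and $1$ lie in $R$, we have $B \in M_2(R)$ and $B \ne O$; and because $R$ is an integral domain the scalar $\lambda$ causes no trouble, so $A B = \lambda\,(\text{rank-one part})\,B = O$ still holds. Hence $A$ has a nonzero right annihilator and is a zero-divisor.

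For the second family, the matrices $\begin{bmatrix} 0 & 0 \\ 0 & \lambda \end{bmatrix}$ with $\lambda \in R \setminus \{0\}$ annihilate $\begin{bmatrix} 1 & 0 \\ 0 & 0 \end{bmatrix}$, as the product $\begin{bmatrix} 0 & 0 \\ 0 & \lambda \end{bmatrix}\begin{bmatrix} 1 & 0 \\ 0 & 0 \end{bmatrix} = O$ shows at once, so these too lie in $Z(M_2(R))$. Combining the two cases yields $S_{\textrm{tc}}(R) \subseteq Z(M_2(R))$.

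I do not anticipate any genuine obstacle: the statement is elementary, and the only content is producing the annihilators. The one point that warrants a line of care is confirming that the chosen $B$ is genuinely nonzero and has entries in $R$ rather than merely in $K$, so that it is a legitimate witness in $M_2(R)$; this is immediate from the fact that $-t, 1 \in R$.
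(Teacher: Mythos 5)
Your proof is correct and follows essentially the same route as the paper: exhibit an explicit nonzero annihilator for each of the two families, and your witness $B=\begin{bmatrix} -t & 0 \\ 1 & 0 \end{bmatrix}$ is exactly the special case $c=1$, $d=0$ of the paper's general right annihilator $\begin{bmatrix} -ct & -dt \\ c & d \end{bmatrix}$ (and similarly for the second family). The only difference is that the paper records the full two-parameter families of \emph{both} right and left annihilators, because those families are reused verbatim in the proof of Theorem \ref{main thm1} to determine which annihilators land back in $S_{\textrm{tc}}(R)$; a single one-sided witness, as you give, is enough for the lemma itself.
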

\begin{proof}
Let $A \in S_{\textrm{tc}}(R).$ We have two cases to consider.
\vskip 0.1in
(i) Let $A=\lambda \begin{bmatrix} 1 & t \\ t^2 & t^3 \end{bmatrix}$ for some $\lambda \in R \setminus \{0\}$ and $t \in R$. We find suitable matrices $B=\begin{bmatrix} a & b \\ c & d \end{bmatrix} \in M_2(R)$ and $C=\begin{bmatrix} \alpha & \beta \\ \gamma & \delta \end{bmatrix} \in M_2(R)$ such that $AB=CA=O.$ Indeed, a direct computation shows that we may take $B=\begin{bmatrix} -ct & -dt \\ c & d \end{bmatrix}$ where $c, d \in R$ with one of them being nonzero, and $C=\begin{bmatrix} -\beta t^2 & \beta \\ - \delta t^2 & \delta \end{bmatrix}$ where $\beta, \delta \in R$ with one of them being nonzero.
\vskip 0.1in
Similarly,
\vskip 0.1in
(ii) Let $A=\begin{bmatrix} 0 & 0 \\ 0 & \lambda \end{bmatrix}$ for some $\lambda \in R \setminus \{0\}.$ Then we may take $B=\begin{bmatrix} a & b \\ 0 & 0 \end{bmatrix}$ for some $a, b \in R$ with one of them being nonzero, and $C=\begin{bmatrix} \alpha & 0 \\ \gamma & 0 \end{bmatrix}$ where $\alpha, \gamma \in R$ with one of them being nonzero.
\vskip 0.1in
Hence it follows from (i) and (ii) that $S_{\textrm{tc}}(R) \subseteq Z(M_2(R))$, as desired.
\end{proof}

In view of Lemma \ref{Stc subset lem}, we can consider the induced subgraph $\Gamma_{S_{\textrm{tc}}(R)}(M_2 (R))$ of $\Gamma(M_2 (R)).$ Then the following is one of our main results.


\begin{theorem}\label{main thm1}
The graph $\Gamma_{S_{\textrm{tc}}(R)}(M_2 (R))$ consists of the following (depending on $d$):
\vskip 0.1in
(Case 1) If $d \ne 1,3$, then
\vskip 0.1in
(1) Component $\Gamma_1$ whose vertices are in $S_{\textrm{tc}, 1,0}(R)$ and $S_{\textrm{tc},2}(R)$, which is an (undirected) infinite complete bipartite graph with $g(\Gamma_1)=4$;
\vskip 0.1in
(2) Component $\Gamma_2$ whose vertices are in $S_{\textrm{tc}, 1,-1}(R)$ and $S_{\textrm{tc},1, 1}(R)$, which is a (directed) infinite graph with $g(\Gamma^{un}_2)=3$ (where $\Gamma^{un}_2$ is the underlying undirected graph of $\Gamma_2$);
\vskip 0.1in
(3) Union of all the remaining $K_1$'s whose vertices are in $S_{\textrm{tc}, 1,t}(R)$ with $t \ne 0, \pm 1 \in R$.
\vskip 0.2in
(Case 2) If $d = 1$, then (after recalling that $i = \sqrt{-1}$)
\vskip 0.1in
(1) Component $\Gamma_1$ whose vertices are in $S_{\textrm{tc}, 1,0}(R)$ and $S_{\textrm{tc},2}(R)$, which is an (undirected) infinite complete bipartite graph with $g(\Gamma_1)=4$;
\vskip 0.1in
(2) Component $\Gamma_2$ whose vertices are in $S_{\textrm{tc}, 1,-1}(R)$, $S_{\textrm{tc},1, 1}(R)$, $S_{\textrm{tc}, 1, i}(R)$, and $S_{\textrm{tc}, 1, -i}(R)$ which is a (directed) infinite graph with $g(\Gamma^{un}_2)=3$ (where $\Gamma^{un}_2$ is the underlying undirected graph of $\Gamma_2$);
\vskip 0.1in
(3) Union of all the remaining $K_1$'s whose vertices are in $S_{\textrm{tc}, 1,t}(R)$ with $t \ne 0, \pm 1, \pm i \in R$.

\vskip 0.2in
(Case 3) If $d = 3$, then (after recalling that $\omega$ is a primitive third of unity)
\vskip 0.1in
(1) Component $\Gamma_1$ whose vertices are in $S_{\textrm{tc}, 1,0}(R)$ and $S_{\textrm{tc},2}(R)$, which is an (undirected) infinite complete bipartite graph with $g(\Gamma_1)=4$;
\vskip 0.1in
(2) Component $\Gamma_2$ whose vertices are in $S_{\textrm{tc}, 1,-1}(R)$, $S_{\textrm{tc},1, 1}(R)$ which is a (directed) infinite graph with $g(\Gamma^{un}_2)=3$ (where $\Gamma^{un}_2$ is the underlying undirected graph of $\Gamma_2$);
\vskip 0.1in
(3) Component $\Gamma_3$ whose vertices are in $S_{\textrm{tc}, 1,\omega}(R)$, $S_{\textrm{tc},1, -\omega}(R)$ which is a (directed) infinite graph with $g(\Gamma^{un}_3)=3$ (where $\Gamma^{un}_3$ is the underlying undirected graph of $\Gamma_3$);
\vskip 0.1in
(4) Component $\Gamma_4$ whose vertices are in $S_{\textrm{tc}, 1,\omega^2}(R)$, $S_{\textrm{tc},1, -\omega^2}(R)$ which is a (directed) infinite graph with $g(\Gamma^{un}_4)=3$ (where $\Gamma^{un}_4$ is the underlying undirected graph of $\Gamma_4$);
\vskip 0.1in
(5) Union of all the remaining $K_1$'s whose vertices are in $S_{\textrm{tc}, 1,t}(R)$ with $t \ne 0, \pm 1, \pm \omega, \pm \omega^2 \in R$.

\end{theorem}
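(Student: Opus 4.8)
The plan is to reduce the entire statement to a single product computation for pairs of vertices in $S_{\textrm{tc}}(R)$ and then to read off all adjacencies using Corollary \ref{main sol}. First I would record the basic multiplication rule: for parameters $s,t \in R$ and scalars $\lambda,\mu \in R\setminus\{0\}$, a direct computation gives
$$\lambda\begin{bmatrix} 1 & s \\ s^2 & s^3\end{bmatrix}\mu\begin{bmatrix} 1 & t \\ t^2 & t^3\end{bmatrix} = \lambda\mu(1 + st^2)\begin{bmatrix} 1 & t \\ s^2 & s^2 t \end{bmatrix},$$
and since the rightmost matrix is never zero (its top-left entry is $1$) and $\lambda\mu\ne 0$, the product vanishes if and only if $st^2=-1$. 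Swapping the roles of $s$ and $t$ shows that the product in the opposite order vanishes if and only if $s^2t=-1$. Hence two vertices of $S_{\textrm{tc},1}(R)$ with parameters $s$ and $t$ are adjacent exactly when $st^2=-1$ or $s^2t=-1$, a condition depending only on $s,t$ and not on $\lambda,\mu$; in particular the edge is bidirectional precisely when both equations hold, which is what distinguishes the ``directed'' components from the ``undirected'' one. Specializing $s=t$ shows that two distinct vertices sharing one parameter $t$ are adjacent if and only if $t^3=-1$.

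Next I would dispose of the vertices of $S_{\textrm{tc},2}(R)$ and of the mixed products. A short computation shows that the product of two elements of $S_{\textrm{tc},2}(R)$ is never $O$, while for $A\in S_{\textrm{tc},1,t}(R)$ and $B\in S_{\textrm{tc},2}(R)$ one has $AB=BA=O$ if and only if $t=0$. Thus every vertex of $S_{\textrm{tc},2}(R)$ is adjacent to every vertex of $S_{\textrm{tc},1,0}(R)$ and to no other vertex of $S_{\textrm{tc}}(R)$; since each of $S_{\textrm{tc},1,0}(R)$ and $S_{\textrm{tc},2}(R)$ is internally edge-free (their common parameter $0$ satisfies $0^3\ne -1$), the component $\Gamma_1$ that they span is the complete bipartite graph on two infinite parts. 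This is connected and triangle-free but contains $4$-cycles, so $g(\Gamma_1)=4$, and the edges being bidirectional makes $\Gamma_1$ undirected, as claimed.

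For the remaining type-$1$ vertices, the key observation is that $st^2=-1$ (resp.\ $s^2t=-1$) forces both $s$ and $t$ to be units of the domain $R$, and likewise $t^3=-1$ forces $t$ to be a unit; hence any parameter involved in an edge must lie in $R^\times$. Feeding the explicit unit groups of Proposition \ref{unit prop} into Corollary \ref{main sol}, I would list all solution pairs of $st^2=-1$ and $s^2t=-1$ in each of the three cases. This yields exactly the parameter sets in the statement: $\{1,-1\}$ (together with $\{i,-i\}$ when $d=1$) forming $\Gamma_2$, and, when $d=3$, the additional pairs $\{\omega,-\omega\}$ and $\{\omega^2,-\omega^2\}$ forming $\Gamma_3$ and $\Gamma_4$. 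In each such component the parameter $p$ with $p^3=-1$ (namely $-1$, $-\omega$, or $-\omega^2$) spans an infinite clique, so the component is connected and its underlying undirected graph has girth $3$; every parameter that is not a unit is untouched by these solutions and therefore gives an isolated vertex $K_1$.

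The part I expect to require the most care is the bookkeeping in the case $d=3$: beyond checking that each of $\{1,-1\}$, $\{\omega,-\omega\}$, $\{\omega^2,-\omega^2\}$ is internally connected, I must verify that no solution of $st^2=-1$ or $s^2t=-1$ links parameters from two different pairs (for instance, that no $\omega$-vertex is adjacent to an $\omega^2$-vertex). This is precisely what guarantees that $\Gamma_2,\Gamma_3,\Gamma_4$ are three distinct connected components rather than a single larger one. Since Corollary \ref{main sol} enumerates every solution pair exhaustively, the verification reduces to inspecting the finite list and confirming that the only pairs occurring keep $\{1,-1\}$, $\{\omega,-\omega\}$, $\{\omega^2,-\omega^2\}$ mutually separated; the analogous but shorter checks for $d\ne 1,3$ and for $d=1$ are then immediate.
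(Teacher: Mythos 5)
Your proposal is correct and follows essentially the same route as the paper: reduce adjacency to the unit equations $st^2=-1$, $s^2t=-1$ (and $t^3=-1$ for vertices sharing a parameter), then enumerate solutions via Corollary \ref{main sol} and Proposition \ref{unit prop}. The only difference is cosmetic --- you obtain these equations from the clean factorization $\lambda\begin{bmatrix} 1 & s \\ s^2 & s^3\end{bmatrix}\mu\begin{bmatrix} 1 & t \\ t^2 & t^3\end{bmatrix}=\lambda\mu(1+st^2)\begin{bmatrix} 1 & t \\ s^2 & s^2t\end{bmatrix}$ rather than by testing when the annihilator matrices $B,C$ of Lemma \ref{Stc subset lem} land back in $S_{\textrm{tc}}(R)$, which is arguably a more direct way to the same case analysis.
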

\begin{proof}
By the definition of the induced subgraph $\Gamma_{S_{\textrm{tc}}(R)}(M_2 (R)),$ it suffices to see when those matrices in the proof of Lemma \ref{Stc subset lem} lie on the subset $S_{\textrm{tc}}(R).$ We consider the following four cases (following the notations in (\ref{eqn2}) above).
\vskip 0.1in
(i) If $B=\begin{bmatrix} -ct & -dt \\ c & d \end{bmatrix} \in S_{\textrm{tc},1}(R)$ (so that $B$ corresponds to a matrix $A=\lambda \begin{bmatrix} 1 & t \\ t^2 & t^3 \end{bmatrix})$, then we may write
$$B=\begin{bmatrix} -ct & -dt \\ c & d \end{bmatrix} = \begin{bmatrix} \mu & \mu s \\ \mu s^2 & \mu s^3 \end{bmatrix}$$
for some $\mu \in R \setminus \{0\}$ and $s \in R.$ By equating, we get an equation $c (1+ts^2)=0$. If $c=0,$ then $\mu=0$, which is absurd, and hence, it follows that $c \ne 0$ and $1+t s^2 =0.$ By Corollary \ref{main sol}, we see that if $d \ne 1,3$ (resp.\ if $d=1$, resp. if $d=3$), then $t=-1, s=\pm 1$ (resp.\ $t=-1, s=\pm 1$ or $t=1, s=\pm i$, resp.\ $t=-1, s=\pm 1$ or $t=-\omega, s=\pm \omega$ or $t=-\omega^2, s= \pm \omega^2$).
\vskip 0.1in 

Now, if $C=\begin{bmatrix} -\beta t^2 & \beta \\ -\delta t^2 & \delta \end{bmatrix} \in S_{\textrm{tc},1}(R)$ (so that $C$ corresponds to a matrix $A=\lambda \begin{bmatrix} 1 & t \\ t^2 & t^3 \end{bmatrix})$, then we may write
$$C=\begin{bmatrix} -\beta t^2 & \beta \\ -\delta t^2 & \delta \end{bmatrix} = \begin{bmatrix} \mu & \mu s \\ \mu s^2 & \mu s^3 \end{bmatrix}$$
for some $\mu \in R \setminus \{0\}$ and $s \in R.$ By equating, we get an equation $\beta (1+st^2)=0$. If $\beta=0,$ then $s=0=\delta$ (because $\mu \ne 0$) so that $C=O$, which is absurd. If $\beta \ne 0,$ then we have $1+st^2 =0.$ By Corollary \ref{main sol}, we see that if $d \ne 1,3$ (resp.\ if $d=1$, resp. if $d=3$), then $t=\pm 1, s=-1$ (resp.\ $t=\pm1, s=-1$ or $t=\pm i, s=1$, resp.\ $t=\pm 1, s=-1$ or $t= \pm \omega, s=- \omega$ or $t=\pm \omega^2, s= -\omega^2$).
\vskip 0.1in
(ii) If $B=\begin{bmatrix} -ct & -dt \\ c & d \end{bmatrix} \in S_{\textrm{tc},2}(R)$ (so that $B$ corresponds to a matrix $A=\lambda \begin{bmatrix} 1 & t \\ t^2 & t^3 \end{bmatrix})$, then we may write
$$B=\begin{bmatrix} -ct & -dt \\ c & d \end{bmatrix} = \begin{bmatrix} 0 & 0 \\ 0 & \mu  \end{bmatrix}$$
for some $\mu \in R \setminus \{0\}$. By equating, we get an equation $d = \mu \ne 0$, and hence, we have $t=0$ (and the corresponding $A=\begin{bmatrix} \lambda & 0 \\ 0 & 0 \end{bmatrix}$). 
\vskip 0.1in 
Now, if $C=\begin{bmatrix} -\beta t^2 & \beta \\ -\delta t^2 & \delta \end{bmatrix} \in S_{\textrm{tc},2}(R)$ (so that $C$ corresponds to a matrix $A=\lambda \begin{bmatrix} 1 & t \\ t^2 & t^3 \end{bmatrix})$, then we may write
$$C=\begin{bmatrix} -\beta t^2 & \beta \\ -\delta t^2 & \delta \end{bmatrix} = \begin{bmatrix} 0 & 0 \\ 0 & \mu \end{bmatrix}$$
for some $\mu \in R \setminus \{0\}$. By equating, we get an equation $\delta = \mu \ne 0$, and hence, we have $t^2=0$, whence $t=0$ (and the corresponding $A=\begin{bmatrix} \lambda & 0 \\ 0 & 0 \end{bmatrix}$).

\vskip 0.1in
(iii) If $B=\begin{bmatrix} a & b \\ 0 & 0 \end{bmatrix} \in S_{\textrm{tc},1}(R)$ (so that $B$ corresponds to a matrix $A=\begin{bmatrix} 0 & 0 \\ 0 & \lambda \end{bmatrix})$, then we may write
$$B=\begin{bmatrix} a & b \\ 0 & 0 \end{bmatrix} = \begin{bmatrix} \mu & \mu s \\ \mu s^2 & \mu s^3 \end{bmatrix}$$
for some $\mu \in R \setminus \{0\}$ and $s \in R.$ By equating, we get $a = \mu \ne 0$ and $s=0$ (because $\mu \ne 0$), whence $b=0$. 
\vskip 0.1in 
Now, if $C=\begin{bmatrix} \alpha & 0 \\ \gamma & 0 \end{bmatrix} \in S_{\textrm{tc},1}(R)$ (so that $C$ corresponds to a matrix $A= \begin{bmatrix} 0 & 0 \\ 0 & \lambda \end{bmatrix})$, then we may write
$$C=\begin{bmatrix} \alpha & 0 \\ \gamma & 0  \end{bmatrix} = \begin{bmatrix} \mu & \mu s \\ \mu s^2 & \mu s^3 \end{bmatrix}$$
for some $\mu \in R \setminus \{0\}$ and $s \in R.$ By equating, we get $\alpha = \mu \ne 0$ and $s=0$ (because $\mu \ne 0$), whence $\gamma=0$.
\vskip 0.1in 
(iv) If $B=\begin{bmatrix} a & b \\ 0 & 0 \end{bmatrix} \in S_{\textrm{tc},2}(R)$ (so that $B$ corresponds to a matrix $A=\begin{bmatrix} 0 & 0 \\ 0 & \lambda \end{bmatrix})$, then we may write
$$B=\begin{bmatrix} a & b \\ 0 & 0 \end{bmatrix} = \begin{bmatrix} 0 & 0 \\ 0 & \mu  \end{bmatrix}$$
for some $\mu \in R \setminus \{0\}$. By equating, we get $\mu =0$, which is absurd. 
\vskip 0.1in 
Now, if $C=\begin{bmatrix} \alpha & 0 \\ \gamma & 0 \end{bmatrix} \in S_{\textrm{tc},2}(R)$ (so that $C$ corresponds to a matrix $A= \begin{bmatrix} 0 & 0 \\ 0 & \lambda \end{bmatrix})$, then we may write
$$C=\begin{bmatrix} \alpha & 0 \\ \gamma & 0  \end{bmatrix} = \begin{bmatrix} 0 & 0 \\ 0 & \mu  \end{bmatrix}$$
for some $\mu \in R \setminus \{0\}$. By equating, we get $\mu =0$, which is again absurd.

\vskip 0.1in
Consequently, it follows from (i) that we obtain (Case 1)-(2), (Case 2)-(2), and (Case 3)-(2),(3),(4), and from (ii), (iii) that we obtain (Case 1)-(1), (Case 2)-(1), and (Case 3)-(1). Also from all the cases (i)-(iv), we obtain (Case 1)-(3), (Case 2)-(3), and (Case 3)-(5).
\vskip 0.1in
 This completes the proof.
\end{proof}

\begin{corollary}\label{not connected lem}
$\Gamma_{S_{\textrm{tc}}(R)}(M_2 (R))$ (and hence, $\Gamma(M_2 (R))$) is not complete.
\end{corollary}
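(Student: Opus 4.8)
The plan is to read the result off the structural description already obtained in Theorem \ref{main thm1}, and then to transfer non-completeness from the induced subgraph to the ambient graph by a purely formal observation. First I would record the elementary principle that every induced subgraph of a complete graph is again complete: if every pair of distinct vertices of $\Gamma(M_2(R))$ were adjacent, then the same would hold for any subset of the vertex set, in particular for $S_{\textrm{tc}}(R)$. Since $\Gamma_{S_{\textrm{tc}}(R)}(M_2(R))$ is by construction the induced subgraph on $S_{\textrm{tc}}(R)$, it therefore suffices to prove that $\Gamma_{S_{\textrm{tc}}(R)}(M_2(R))$ is \emph{not} complete; the non-completeness of $\Gamma(M_2(R))$ follows immediately, which accounts for the parenthetical ``and hence'' in the statement.

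To see that the induced subgraph is not complete, I would invoke Theorem \ref{main thm1} directly: in each of the three cases that graph is exhibited as a disjoint union of several connected components (for instance the complete bipartite component $\Gamma_1$ together with $\Gamma_2$ and the leftover $K_1$'s), so it is in particular disconnected, and a disconnected graph with at least two vertices cannot be complete. Since $\Gamma(M_2(R))$ is infinite by Corollary \ref{inf cor}, the vertex set is certainly nonempty, so this already settles the claim. Alternatively, and more self-containedly, I would simply exhibit two non-adjacent vertices: take $A=\begin{bmatrix} 1 & 0 \\ 0 & 0 \end{bmatrix}$ (the element of $S_{\textrm{tc},1,0}(R)$ with $\lambda=1$) and $B=\begin{bmatrix} 1 & 1 \\ 1 & 1 \end{bmatrix}$ (the element of $S_{\textrm{tc},1,1}(R)$ with $\lambda=1$). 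Both lie in $S_{\textrm{tc}}(R)\subseteq Z(M_2(R))$ by Lemma \ref{Stc subset lem}, hence are genuine distinct vertices, and a direct computation gives $AB=\begin{bmatrix} 1 & 1 \\ 0 & 0 \end{bmatrix}\ne O$ and $BA=\begin{bmatrix} 1 & 0 \\ 1 & 0 \end{bmatrix}\ne O$, so $A$ and $B$ are not adjacent in the underlying undirected graph. Either route produces the desired conclusion.

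There is essentially no deep obstacle here, as the corollary is an immediate by-product of Theorem \ref{main thm1}; the only points deserving a line of care are the formal reduction in the first paragraph (that completeness passes to induced subgraphs, so non-completeness lifts back to the whole graph) and, if one prefers the explicit witnesses, the verification that $A$ and $B$ are nonzero zero-divisors so that they really occur as vertices of $\Gamma(M_2(R))$. I would present the explicit pair $A,B$ as the cleanest self-contained argument.
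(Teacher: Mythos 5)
Your proposal is correct and matches the paper's (implicit) argument: the corollary is stated without proof as an immediate consequence of Theorem \ref{main thm1}, precisely because that theorem exhibits $\Gamma_{S_{\textrm{tc}}(R)}(M_2(R))$ as a disjoint union of several components, and non-completeness of an induced subgraph lifts to the ambient graph. Your explicit witness pair $A=\begin{bmatrix} 1 & 0 \\ 0 & 0 \end{bmatrix}$, $B=\begin{bmatrix} 1 & 1 \\ 1 & 1 \end{bmatrix}$ with $AB\ne O$ and $BA\ne O$ is a correct, self-contained bonus, but not a genuinely different route.
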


\begin{remark}\label{ring rmk1}
The graph $\Gamma_{S_{\textrm{tc}}(R)}(M_2(R))$ has four non-trivial (that is, non-$K_1$'s) connected components if and only if $d = 3.$  
\end{remark}

To give one consequence of Corollary \ref{not connected lem}, we begin with introducing some terminologies following \cite{1}. Let $X=X(M_2(R))$ be the set of all nonzero nonunits of the ring $M_2(R),$ and let $G=G(M_2(R))$ be the group of all units in $M_2(R).$ Note that $Z(M_2(R))^{\times} \subseteq X.$ Now, $G$ naturally acts on $X$ by $G \times X \rightarrow X, ~~(g,x) \mapsto gx,$ which is called the \emph{left regular action of $G$ on $X.$} In this situation, we first have the following result, which is essentially the same as \cite[Proposition 2.1]{1}.
\begin{lemma}\label{trans comp lem}
If the left regular action of $G$ on $X$ is transitive, then the zero-divisor graph $\Gamma(M_2(R))$ is complete.
\end{lemma}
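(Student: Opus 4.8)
The plan is to prove the implication directly: assuming the left regular action of $G$ on $X$ is transitive, I will show that any two distinct vertices of $\Gamma(M_2(R))$ are adjacent in $\Gamma^{\mathrm{un}}(M_2(R))$. The entire argument amounts to transporting a single annihilation relation around an orbit, so before starting I would isolate two elementary facts. First, $Z(M_2(R))^{\times} \subseteq X$: a zero-divisor can never be a unit, so every vertex of $\Gamma(M_2(R))$ is a nonzero nonunit and hence lies in the set $X$ on which $G$ acts. Second, every nonzero zero-divisor $v$ of $M_2(R)$ possesses a nonzero \emph{left} annihilator, i.e.\ a matrix $a \ne O$ with $av = O$; this is the only place where the structure of $M_2(R)$ enters, and it follows from $\det v = 0$ (as recorded in the Remark following Corollary \ref{inf cor}): the transpose $v^{\mathrm T}$ is singular over the fraction field $K$, so $xv = 0$ has a nonzero row solution $x$ over $K$, which after clearing denominators gives a nonzero row $x$ over $R$, and then $a = \left[\begin{smallmatrix} x \\ x \end{smallmatrix}\right]$ works.

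With these in hand the main argument is short. Given distinct vertices $v_1, v_2 \in Z(M_2(R))^{\times}$, choose by the second fact a nonzero $a$ with $a v_1 = O$. If $a$ were a unit we could cancel it to get $v_1 = O$, a contradiction; hence $a$ is a nonzero nonunit, so $a \in X$. Since $v_2 \in Z(M_2(R))^{\times} \subseteq X$ as well, transitivity of the left regular action furnishes a unit $g \in G$ with $g a = v_2$. Associativity then gives $v_2 v_1 = (ga)v_1 = g(a v_1) = g\cdot O = O$, so there is an edge $v_2 \to v_1$ and $v_1, v_2$ are adjacent. As $v_1, v_2$ were arbitrary distinct vertices, $\Gamma(M_2(R))$ is complete.

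I expect the only genuinely delicate point to be bookkeeping the \emph{direction} of multiplication. Because $G$ acts on the left, I must start from a left annihilator of $v_1$ (not a right one) and move it onto $v_2$ by left multiplication; it is precisely the identity $(ga)v_1 = g(av_1)$ that converts the transported relation into the desired product $v_2 v_1 = O$. Using instead a right annihilator $b$ of $v_1$ with $v_1 b = O$ would leave $v_1(gb)$, which does not simplify, so that route fails and the left/right matching is essential. The secondary point to check carefully is that the chosen annihilator is a nonunit so that it really lies in $X$; once that is confirmed, transitivity applies verbatim and no further computation is needed.
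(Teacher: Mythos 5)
Your proof is correct, but it takes a genuinely different route from the paper's. The paper applies transitivity directly to the two vertices: since $x, y \in Z(M_2(R))^{\times} \subseteq X$, it writes $x = gy$ and concludes $xy = gy^2 = 0$, invoking Remark 1 of the cited paper of Han to supply the fact that $y^2 = 0$ under the transitivity hypothesis; it then notes that a symmetric argument gives $yx = 0$. You instead avoid any external input by manufacturing an auxiliary element of $X$: an explicit nonzero left annihilator $a$ of $v_1$, obtained from $\det v_1 = 0$ by solving $xv_1 = 0$ over the fraction field $K$ and clearing denominators, and then transport $a$ onto $v_2$ by transitivity so that $v_2 v_1 = g(av_1) = O$. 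What your approach buys is self-containedness and a concrete witness (your checks that $a$ is a nonunit, hence lies in $X$, and that the left/right bookkeeping forces a \emph{left} annihilator, are exactly the right delicate points); what it costs is generality, since the rank argument is specific to matrix rings over a domain, whereas the paper's argument works for any ring once Han's remark is granted. One small presentational gap: you exhibit only the directed edge $v_2 \to v_1$; since the paper's graph is directed and its proof explicitly records both $xy = 0$ and $yx = 0$, you should add the one-line observation that swapping the roles of $v_1$ and $v_2$ in your argument also yields $v_1 v_2 = O$ (or state clearly that completeness is being read in $\Gamma^{\mathrm{un}}$, for which one direction suffices).
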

\begin{proof}
Let $x, y \in Z(M_2(R))^{\times}.$ Then clearly, $x, y \in X$, and then, since the left regular action of $G$ on $X$ is transitive, there exists a $g \in G$ such that $x=gy.$ It follows that we have $xy = g y^2 = 0$ in view of \cite[Remark 1]{1}. A similar argument shows that $y x =0.$ Hence, we can conclude that $\Gamma(M_2(R))$ is complete.
\end{proof}
From Corollary \ref{not connected lem} and Lemma \ref{trans comp lem}, we obtain the following fact.
\begin{corollary}
The left regular action of $G$ on $X$ is not transitive.
\end{corollary}
\begin{remark}
The same assertion holds for the right regular action of $G$ on $X$ (that is given by $G \times X \rightarrow X$, $(g,x) \mapsto x g^{-1}$).
\end{remark}

We give another aspect in terms of the connectedness of the subgraph in the next remark.
\begin{remark}
By Theorem \ref{main thm1}, the graph $\Gamma_{S_{\textrm{tc}}(R)} (M_2 (R))$ is not connected. On the other hand, in view of \cite[Theorem 3.1]{6}, the graph $\Gamma(M_2(R))$ is connected. Hence the graph $\Gamma_{S_{\textrm{tc}}(R)}(M_2(R))$ provides one nontrivial example where an induced subgraph of a connected graph might not be connected.
\end{remark}


From the infinite graph $\Gamma_{S_{tc}(R)}(M_2(R)),$ we can further consider its finite induced subgraphs by restricting the values of $\lambda.$ To this aim, we first introduce the following subsets of $S_{\textrm{tc}}(R).$
\vskip 0.1in
\begin{definition}\label{level def}
For an integer $N \geq 1$, we let 
$$S_{tc,1,t,N}= \left \{ \lambda \begin{bmatrix} 1 & t \\ t^2 & t^3 \end{bmatrix} ~|~   -N \leq \lambda \leq N, \lambda \in \Z \setminus \{0\}, t \in R \right \}$$ and   
$$S_{tc,2,N}=\left \{ \begin{bmatrix} 0 & 0 \\ 0 & \lambda \end{bmatrix}~|~ -N \leq \lambda \leq N, \lambda \in \Z \setminus \{0\} \right \}.$$
\end{definition}
\begin{remark}
It might be also natural to consider the following sets instead of $S_{\textrm{tc},1,t,N}$ and $S_{\textrm{tc},2,N}$: for an integer $N \geq 1,$ let
$$S^{\prime}_{\textrm{tc},1,t,N} = \left \{ \lambda \begin{bmatrix} 1 & t \\ t^2 & t^3 \end{bmatrix} ~|~   Nr_{K/\Q}(\lambda) \leq N, \lambda \in R \setminus \{0\}, t \in R \right \}$$ and
$$S^{\prime}_{tc,2,N}=\left \{ \begin{bmatrix} 0 & 0 \\ 0 & \lambda \end{bmatrix}~|~ Nr_{K/\Q}(\lambda) \leq N, \lambda \in R \setminus \{0\} \right \}$$
where $Nr_{K/\Q}$ denotes the field norm of $K$ over $\Q.$ It turns out that those two sets are considerably harder to describe precisely. As an example, if $d=1$ so that $K= \Q(i),$ $R=\Z[i]$, and $\lambda = a+ bi \in \Z[i],$ then we have $Nr_{K/\Q}(\lambda)=a^2 +b^2.$ Hence, if $Nr_{K/\Q}(\lambda) \leq N,$ then we need to consider the inequality $a^2 + b^2 \leq N,$ which turns out to be very difficult to solve as $N$ tends to infinity.
\end{remark}
In terms of graphs, we also give the following.
\begin{definition}\label{subgraph def}
For an integer $N \geq 1,$ we define the following induced subgraphs of $\Gamma_{S_{tc}(R)}(M_2(R))$ (depending on $d$):
\vskip 0.1in
(Case 1) If $d \ne 1, 3$, then we let $\Gamma_{1,N}$ be the induced subgraph of $\Gamma_{S_{tc}(R)}(M_2(R))$ whose vertices lie on the set $S_{\textrm{tc}, 1, 0, N} \cup S_{\textrm{tc},2,N},$ and let $\Gamma_{2,N}$ be the induced subgraph of $\Gamma_{S_{tc}(R)}(M_2(R))$ whose vertices lie on the set $\displaystyle \bigcup_{j \in \{\pm 1\}}S_{\textrm{tc}, 1, j, N}.$
\vskip 0.2in
(Case 2) If $d = 1$, then we let $\Gamma_{1,N}$ be the induced subgraph of $\Gamma_{S_{tc}(R)}(M_2(R))$ whose vertices lie on the set $S_{\textrm{tc}, 1, 0, N} \cup S_{\textrm{tc},2,N},$ and let $\Gamma_{2,N}$ be the induced subgraph of $\Gamma_{S_{tc}(R)}(M_2(R))$ whose vertices lie on the set $\displaystyle \bigcup_{j \in \{\pm 1, \pm i\}}S_{\textrm{tc}, 1, j, N},$ where $i=\sqrt{-1}$. 
\vskip 0.2in
(Case 3) If $d = 3$, then we let $\Gamma_{1,N}$ be the induced subgraph of $\Gamma_{S_{tc}(R)}(M_2(R))$ whose vertices lie on the set $S_{\textrm{tc}, 1, 0, N} \cup S_{\textrm{tc},2,N},$ and let $\Gamma_{2,N}$ be the induced subgraph of $\Gamma_{S_{tc}(R)}(M_2(R))$ whose vertices lie on the set $\displaystyle \bigcup_{j \in \{\pm 1, \pm \omega, \pm \omega^2 \}}S_{\textrm{tc}, 1, j, N},$ where $\omega$ is a primitive third root of unity. 

\end{definition}
Definitely, the graphs $\Gamma_{1,N}$ and $\Gamma_{2,N}$ are finite induced subgraphs of $\Gamma_1$ and $\Gamma_2$ in Theorem \ref{main thm1}, respectively, for the case when $d \ne 3$. Also, for $d=3,$ the graphs $\Gamma_{1,N}$ and $\Gamma_{2, N}$ are finite induced subgraphs of $\Gamma_1$ and $\bigcup_{j \in \{2,3,4\}} \Gamma_j,$ respectively. (In particular, $\Gamma_{1, N}$ will be regarded as an undirected graph in the sequel.) To describe $\Gamma_{1,N}$ and $\Gamma_{2,N}$ in more details in Remark \ref{planar ex} below, we first introduce an operation on two graphs.
\begin{definition}
Let $G=(V, E)$ and $G^{\prime}=(V^{\prime}, E^{\prime})$ be two disjoint undirected graphs. Then we define the \emph{join} of $G$ and $G^{\prime}$, denoted by $G * G^{\prime}$, to be the graph that is obtained from the graph $G \cup G^{\prime}$ by joining all the vertices of $G$ to all the vertices of $G^{\prime}.$ 
\end{definition}

Now, the followings are the main descriptions on the graphs $\Gamma_{1,N}$ and $\Gamma_{2,N}$ depending on $d$ and $N.$
\begin{remark}\label{planar ex}
(1) Suppose that $d \ne 1, 3.$ Then we have $\Gamma_{1,1} = K_{2,2}$, and more generally, we note that $\Gamma_{1,n}=K_{2n,2n}$ for all $n \geq 1$. On the other hand, the underlying undirected graph $\Gamma^{un}_{2,1}$ of $\Gamma_{2,1}$ is a simple connected graph with 4 vertices and 5 edges, and the degree sequence $(3, 3, 2, 2),$ (which is isomorphic to the graph $K_4$ minus one edge) and more generally, we note that the underlying undirected graph $\Gamma^{un}_{2,n}$ of $\Gamma_{2,n}$ is a simple connected graph with $4n$ vertices and $6n^2 - n$ edges, and the degree sequence $(\smash[b]{\ \underbrace{4n-1, 4n-1, \cdots, 4n-1}_\text{$2n$ times}},\smash[b]{\ \underbrace{2n, 2n, \cdots, 2n}_\text{$2n$ times}} )$
\vskip 0.25in
\noindent (which is isomorphic to the graph $K_{2n}*(2n K_1)$). In particular, we have that $\Gamma_{1,n} \cup \Gamma^{un}_{2,n}$ is isomorphic to the graph $K_{2n,2n} \cup (K_{2n}*(2n K_1)).$ Similarly,
\vskip 0.1in
(2) Suppose that $d = 1.$ Then we have $\Gamma_{1,1} = K_{2,2}$, and more generally, we note that $\Gamma_{1,n}=K_{2n,2n}$ for all $n \geq 1$. On the other hand, the underlying undirected graph $\Gamma^{un}_{2,1}$ of $\Gamma_{2,1}$ is isomorphic to the graph $2K_1 * (K_2 \cup 2K_1 \cup 2K_1)=2K_1 * (K_2 \cup 4K_1)$, and more generally, we note that the underlying undirected graph $\Gamma^{un}_{2,n}$ of $\Gamma_{2,n}$ is isomorphic to the graph $2n K_1 * (K_{2n}\cup 2n K_1 \cup 2n K_1) = 2n K_1 * (K_{2n} \cup 4n K_1)$. 
\vskip 0.1in
(3) Suppose that $d = 3.$ Then we have $\Gamma_{1,1} = K_{2,2}$, and more generally, we note that $\Gamma_{1,n}=K_{2n,2n}$ for all $n \geq 1$. On the other hand, the underlying undirected graph $\Gamma^{un}_{2,1}$ of $\Gamma_{2,1}$ is isomorphic to the graph $(2K_1 * K_2) \cup (2K_1*K_2) \cup (2K_1 * K_2)$, and more generally, we note that the underlying undirected graph $\Gamma^{un}_{2,n}$ of $\Gamma_{2,n}$ is isomorphic to the graph $(2nK_1 * K_{2n}) \cup (2nK_1*K_{2n}) \cup (2nK_1 * K_{2n})$.
\end{remark}
In light of Remark \ref{planar ex}, we can see that the graphs $\Gamma_{1, n}$ and $\Gamma_{2,n}^{un}$ do not contain the path graph $P_4$ as an induced subgraph, and hence, they are of special type as indicated in the following theorem.
\begin{theorem}
For each $n \geq 1$ and any square-free integer $d,$ $\Gamma_{1,n}$ and $\Gamma_{2,n}^{un}$ are cographs. 
\end{theorem}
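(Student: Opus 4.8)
The plan is to bypass any direct search for induced paths and instead invoke the standard recursive (complement-reducible) characterization of cographs: the class of cographs is the smallest class of finite graphs that contains the one-vertex graph $K_1$ and is closed under the two operations of disjoint union $G \cup H$ and join $G * H$. (Equivalently, this is precisely the class of $P_4$-free graphs, which is the notion used here.) Granting this characterization, the theorem reduces to reading off the explicit structural descriptions of $\Gamma_{1,n}$ and $\Gamma_{2,n}^{un}$ recorded in Remark \ref{planar ex} and checking that each of them is assembled from copies of $K_1$ using only joins and disjoint unions.

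First I would record the elementary building blocks. The graph $K_1$ is a cograph by definition; the edgeless graph $m K_1$ is a cograph, being the disjoint union of $m$ copies of $K_1$; the complete graph $K_m = K_1 * \cdots * K_1$ (an $m$-fold join) is a cograph; and the complete bipartite graph $K_{a,b} = (a K_1) * (b K_1)$ is a cograph. Combined with closure of the class under $\cup$ and $*$, these observations suffice to decide every graph appearing in Remark \ref{planar ex}.

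It then remains to apply these closure properties case by case. For every square-free $d$ we have $\Gamma_{1,n} \cong K_{2n,2n} = (2n K_1) * (2n K_1)$, a join of two cographs, hence a cograph. For the graphs $\Gamma_{2,n}^{un}$: when $d \ne 1,3$ we have $\Gamma_{2,n}^{un} \cong K_{2n} * (2n K_1)$, a join of two cographs; when $d = 1$ we have $\Gamma_{2,n}^{un} \cong 2n K_1 * (K_{2n} \cup 4n K_1)$, where $K_{2n} \cup 4n K_1$ is a disjoint union of cographs and the whole graph is its join with the cograph $2n K_1$; and when $d = 3$ we have $\Gamma_{2,n}^{un} \cong (2n K_1 * K_{2n}) \cup (2n K_1 * K_{2n}) \cup (2n K_1 * K_{2n})$, a disjoint union of three joins of cographs. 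In each case the graph is built from $K_1$ by joins and disjoint unions alone, so it is a cograph, completing the proof.

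The only genuine content is the structural input from Remark \ref{planar ex}; once that is granted there is no real obstacle, since cographs are by design closed under exactly the two operations ($\cup$ and $*$) that appear in those descriptions. If one preferred to avoid citing the recursive characterization altogether, the alternative would be to verify $P_4$-freeness directly, the main point being that the two interior vertices of an induced $P_4$ have degree $2$ and its two ends degree $1$ within the path, a configuration that cannot survive inside a join (every vertex of one factor is adjacent to all of the other) nor arise across distinct blocks of a disjoint union; but invoking the recursive characterization makes this bookkeeping unnecessary.
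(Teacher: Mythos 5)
Your proof is correct. The paper itself gives essentially no formal proof: it simply observes, in the sentence preceding the theorem, that by Remark \ref{planar ex} the graphs $\Gamma_{1,n}$ and $\Gamma^{un}_{2,n}$ contain no induced $P_4$, i.e.\ it invokes the $P_4$-free characterization of cographs directly. You instead invoke the equivalent recursive characterization (the smallest class containing $K_1$ and closed under disjoint union and join) and check that each of the decompositions in Remark \ref{planar ex} --- $K_{2n,2n}=(2nK_1)*(2nK_1)$, $K_{2n}*(2nK_1)$, $2nK_1*(K_{2n}\cup 4nK_1)$, and the threefold disjoint union of $2nK_1*K_{2n}$ --- is assembled from $K_1$ by exactly those two operations. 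Both arguments consume the same structural input and are equally short; what your route buys is that it requires no inspection of vertex configurations at all (the closure properties do everything), whereas the paper's route avoids citing the equivalence of the two characterizations. Your closing remark, that an induced $P_4$ cannot survive inside a join nor straddle components of a disjoint union, correctly bridges the two formulations, so either version stands on its own.
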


The next remark, together with Remark \ref{ring rmk1}, provides one example, where we can relate the property of the ring $R$ and the structure of the graph $\Gamma_{S_{\textrm{tc}}(R)}(M_2 (R))$.
\begin{remark}
Suppose $d \ne 3$ so that $\Gamma_{S_{\textrm{tc}}(R)}(M_2(R))$ has two non-trivial connected components. Then for any integer $N \geq 1,$ the underlying undirected graph of the finite induced subgraph $\Gamma_{2,N}$ of $\Gamma_2$ has $8N$ vertices with the degree sequence 
$$(\smash[b]{\ \underbrace{6N, 6N, \cdots, 6N}_\text{$2N$ times}},\smash[b]{\ \underbrace{4N-1, 4N-1, \cdots, 4N-1}_\text{$2N$ times}}, \smash[b]{\ \underbrace{2N, 2N, \cdots, 2N}_\text{$4N$ times}} )$$
 \vskip 0.1in
\noindent if and only if $d = 1.$ 
\end{remark}
\vskip 0.1in

Note also that Remark \ref{planar ex} gives a partial answer for the following general question.
\begin{problem}
Classify all the finite graphs that are (isomorphic to) an induced subgraph of the zero-divisor graph $\Gamma(M_2(R))$ for some commutative ring $R.$
\end{problem}
Now, using the above Remark \ref{planar ex}, we can further tell several fundamental properties of the finite graphs $\Gamma_{1,n}$ and $\Gamma^{un}_{2,n}$ for various $n \geq 1.$ The first one is about the property of being planar graphs.
\begin{theorem}
Let $n \geq 1$ be an integer. 
\vskip 0.1in
(a) For any $d,$ the finite graph $\Gamma_{1,n}$ is planar if and only if $n =1.$
\vskip 0.1in
(b) For any $d,$ the underlying undirected graph $\Gamma^{un}_{2,n}$ of $\Gamma_{2,n}$ is planar if and only if $n=1.$

\vskip 0.1in
\end{theorem}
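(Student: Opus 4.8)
The plan is to use the explicit structural descriptions of $\Gamma_{1,n}$ and $\Gamma^{un}_{2,n}$ given in Remark \ref{planar ex}, and then invoke Kuratowski's theorem (a finite graph is planar if and only if it contains no subdivision of $K_5$ or $K_{3,3}$) together with its friendlier variant: $K_{3,3}$ and $K_5$ are themselves the minimal non-planar obstructions. Since both families of graphs are given completely explicitly in terms of $K_{m,m}$, complete graphs $K_m$, and joins, the whole theorem reduces to two routine checks — a small base case and a non-planarity argument for $n \geq 2$.

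For part (a), the key observation is that Remark \ref{planar ex} tells us $\Gamma_{1,n} = K_{2n,2n}$ for every $d$ and every $n \geq 1$. So the first step is to note that $\Gamma_{1,1} = K_{2,2}$, which is a $4$-cycle and hence obviously planar. The second step is the case $n \geq 2$: here $K_{2n,2n}$ contains $K_{4,4}$, and in particular contains $K_{3,3}$ as a subgraph, so it is non-planar by Kuratowski's theorem. This settles (a) completely, since the two cases are exhaustive and give opposite answers. I would phrase this by simply recalling that $K_{m,m}$ is planar precisely when $m \leq 2$.

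For part (b) the argument is slightly more involved because the structure of $\Gamma^{un}_{2,n}$ depends on $d$, so I would split into the three cases of Remark \ref{planar ex}. In each case, the base case $n=1$ must be verified to be planar: for $d \neq 1,3$ it is $K_4$ minus an edge (clearly planar, being a small graph on four vertices); for $d=1$ it is $2K_1 * (K_2 \cup 4K_1)$ and for $d=3$ it is a disjoint union of three copies of $2K_1 * K_2$, and in each case a direct sketch of a planar embedding (or a degree/edge count against Euler's bound) confirms planarity. For $n \geq 2$, the goal is to exhibit a $K_{3,3}$ or $K_5$ (sub)graph. The cleanest route is to locate a large complete subgraph: in all three cases $\Gamma^{un}_{2,n}$ contains a copy of $K_{2n}$ (either as the clique $K_{2n}$ appearing directly in the join decomposition, or from the complete-graph factor), and for $n \geq 2$ we have $2n \geq 4$; one then uses the join with $2nK_1$ to upgrade this to a $K_5$, or directly extracts $K_{3,3}$ from the join of $2n K_1$ with a set containing at least three mutually adjacent vertices. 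I would record the precise sub-configuration — e.g. three vertices from the $2nK_1$ part and three from the $K_{2n}$ part form a $K_{3,3}$ once $2n \geq 3$ — and invoke Kuratowski's theorem to conclude non-planarity.

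\textbf{The main obstacle} I anticipate is purely bookkeeping rather than conceptual: one must handle the three $d$-dependent descriptions of $\Gamma^{un}_{2,n}$ uniformly enough to avoid repeating essentially the same $K_{3,3}$-extraction three times, while still being careful that the base case $n=1$ genuinely is planar in each case (this is where the opposite answer comes from, so it cannot be glossed over). The delicate point is verifying planarity of the $n=1$ graphs for $d=1$ and $d=3$: these are joins and unions of joins, so I would either exhibit explicit planar drawings or, more safely, check the Euler inequality $|E| \leq 3|V| - 6$ for these small graphs to rule out any accidental hidden $K_{3,3}$ or $K_5$. Once the $n=1$ embeddings are pinned down and the single $K_{3,3}$-extraction for $n\geq 2$ is isolated, the proof assembles immediately from Kuratowski's theorem.
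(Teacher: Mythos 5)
Your proposal is correct and follows essentially the same route as the paper: both reduce to the explicit descriptions in Remark \ref{planar ex}, verify the $n=1$ base cases directly, and for $n \geq 2$ extract a $K_{3,3}$ subgraph (from $K_{2n,2n}$ in part (a) and from the join structure in part (b)) to conclude non-planarity via Kuratowski's theorem. Your suggestion to back up the small base cases with an explicit embedding or the Euler bound is, if anything, slightly more careful than the paper's brief ``contains neither a $K_{3,3}$ nor a $K_5$'' justification.
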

\begin{proof}
(a) By Remark \ref{planar ex}, for each $n \geq 1,$ the graph $\Gamma_{1,n}$ is isomorphic to the complete bipartite graph $K_{2n, 2n}.$ Now, $\Gamma_{1,1} = K_{2,2}$ is clearly planar. For $n \geq 2,$ we see that $\Gamma_{1,n}=K_{2n, 2n}$ contains a $K_{3,3}$ as a subgraph, and hence, $\Gamma_{1,n}$ is not planar by Kuratowski's theorem. 
\vskip 0.1in
Similarly,
\vskip 0.1in
(b) We consider the following three cases.
\vskip 0.1in
(i) If $d \ne 1,3$, then by Remark \ref{planar ex}, for each $n \geq 1,$ $\Gamma^{un}_{2,n}$ is isomorphic to $K_{2n}*(2n K_1).$ Now, $\Gamma^{un}_{2,1}$ is a $K_4$ minus one edge, which is clearly planar. For $n \geq 2,$ we see that $\Gamma^{un}_{2,n} = K_{2n}*(2n K_1)$ contains a $K_{3,3}$ as a subgraph, and hence, it is not planar. Similarly,
\vskip 0.1in
(ii) If $d = 1$, then by Remark \ref{planar ex}, for each $n \geq 1,$ $\Gamma^{un}_{2,n}$ is isomorphic to $2nK_1 * (K_{2n} \cup 4n K_1).$ Now, $\Gamma^{un}_{2,1}=2K_1 * (K_2 \cup 4K_1)$ contains neither a $K_{3,3}$ nor a $K_5$, and hence, it is planar. For $n \geq 2,$ we see that $\Gamma^{un}_{2,n} = 2nK_1 * (K_{2n} \cup 4n K_1)$ contains a $K_{3,3}$ as a subgraph, and hence, it is not planar. 
\vskip 0.1in
(iii) If $d = 3$, then by Remark \ref{planar ex}, for each $n \geq 1,$ $\Gamma^{un}_{2,n}$ is isomorphic to $(2nK_1 * K_{2n}) \cup (2n K_1 * K_{2n}) \cup (2n K_1 * K_{2n}).$ Now, $\Gamma^{un}_{2,1}=(2K_1 * K_{2}) \cup (2 K_1 * K_{2}) \cup (2 K_1 * K_{2})$ contains neither a $K_{3,3}$ nor a $K_5$, and hence, it is planar. For $n \geq 2,$ we see that $\Gamma^{un}_{2,n} = (2nK_1 * K_{2n}) \cup (2n K_1 * K_{2n}) \cup (2n K_1 * K_{2n})$ contains a $K_{3,3}$ as a subgraph, and hence, it is not planar.
\vskip 0.1in
This completes the proof. 
\end{proof}
The second one is about the proper coloring of the finite graphs $\Gamma_{1,n}$ and $\Gamma^{un}_{2,n}$ for $n \geq 1.$
\begin{theorem}\label{chromatic thm}
(a) For any $d,$ $\chi (\Gamma_{1,n})=2$ for all $n \geq 1.$
\vskip 0.1in
(b) For any $d,$ $\chi (\Gamma^{un}_{2,n}) = 2n+1$ for any $n \geq 1.$
\end{theorem}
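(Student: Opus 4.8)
The plan is to compute the chromatic number directly from the explicit structural descriptions of $\Gamma_{1,n}$ and $\Gamma^{un}_{2,n}$ provided in Remark \ref{planar ex}, using the standard facts that $\chi(K_{m,m})=2$ for $m \geq 1$, that $\chi$ of a disjoint union is the maximum of the $\chi$'s of the components, and that the join satisfies $\chi(G * G') = \chi(G) + \chi(G')$. For part (a), since $\Gamma_{1,n} \cong K_{2n,2n}$ is a complete bipartite graph on at least one vertex on each side, it is immediate that $\chi(\Gamma_{1,n})=2$ regardless of $d$.

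For part (b), I would split into the three cases according to $d$, exactly as in Remark \ref{planar ex}. In the case $d \ne 1,3$, we have $\Gamma^{un}_{2,n} \cong K_{2n} * (2n K_1)$; since $\chi(K_{2n})=2n$ and $\chi(2n K_1)=1$, the join formula gives $\chi = 2n + 1$. In the case $d=1$, we have $\Gamma^{un}_{2,n} \cong 2n K_1 * (K_{2n} \cup 4n K_1)$; here $\chi(2n K_1)=1$ and $\chi(K_{2n} \cup 4n K_1) = \max\{\chi(K_{2n}), \chi(4n K_1)\} = \max\{2n,1\} = 2n$, so again the join gives $1 + 2n = 2n+1$. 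In the case $d=3$, we have $\Gamma^{un}_{2,n}$ as a disjoint union of three copies of $2n K_1 * K_{2n}$; each such join has $\chi = 1 + 2n = 2n+1$, and the disjoint union of graphs all having the same chromatic number $2n+1$ has chromatic number $2n+1$.

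The only genuine content beyond bookkeeping is the join formula $\chi(G * G') = \chi(G) + \chi(G')$, which holds because every vertex of $G$ is adjacent to every vertex of $G'$, forcing the color classes of $G$ and $G'$ to be disjoint (so one needs at least $\chi(G)+\chi(G')$ colors), while combining an optimal coloring of each side with disjoint palettes achieves this bound; I would state this as a one-line observation rather than prove it. I expect no real obstacle here: the entire argument is an application of three elementary chromatic-number identities to the three structural normal forms already established. The main care required is simply to verify in each case that the clique $K_{2n}$ (which forces $\chi \geq 2n$) together with the universal vertices from a join (which forces one additional color) produces exactly $2n+1$, and that in the $d=3$ case taking a disjoint union does not increase the chromatic number beyond that of a single component.
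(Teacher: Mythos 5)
Your proposal is correct and follows essentially the same route as the paper: both arguments reduce to the structural normal forms of Remark \ref{planar ex} and observe that the clique $K_{2n}$ forces at least $2n$ colors while the join contributes exactly one more, with the $d=3$ case handled by taking the maximum over identical components. The only cosmetic difference is that you package the key step as the general identity $\chi(G * G') = \chi(G) + \chi(G')$, whereas the paper verifies the same lower and upper bounds inline.
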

\begin{proof}
(a) For any $d,$ since $\Gamma_{1,n} = K_{2n,2n},$ we have $\chi(\Gamma_{1,n})=\chi(K_{2n,2n})=2$.
\vskip 0.1in
(b) We consider the following two cases.
\vskip 0.1in
(i) If $d \ne 1, 3$ (resp.\ $d = 3$), then since $\Gamma^{un}_{2,n}=K_{2n}*(2n K_1)$ (resp.\ $\Gamma^{un}_{2,n}=(K_{2n}*(2n K_1)) \cup (K_{2n}*(2n K_1)) \cup (K_{2n}*(2n K_1))$), it suffices to compute $\chi(K_{2n}*(2n K_1)).$ Clearly, we need at least $2n$ different colors to color $ K_{2n}$ properly, and then, we only need one additional different color to properly color the join $K_{2n} * (2nK_1),$ and hence, we obtain that $\chi (K_{2n}* (2n K_1)) \geq 2n+1.$ Then from the fact that $\chi(K_{2n})=2n,$ it follows that $\chi(\Gamma^{un}_{2,n})=2n+1.$  
\vskip 0.1in
(ii) If $d = 1$, then we recall that $\Gamma^{un}_{2,n}= 2nK_1 * (K_{2n} \cup 4n K_1)$. Clearly, we need at least $2n$ different colors to properly color $ K_{2n},$ and then, we only need one additional different color to color the remaining $2n K_1$ and $4n K_1$ properly, and hence, we obtain that $\chi ( 2n K_1 * (K_{2n} \cup 4n K_1 ) \geq 2n+1.$ Then again from the fact that $\chi(K_{2n})=2n,$ it follows that $\chi(\Gamma^{un}_{2,n})=2n+1.$
\vskip 0.1in
This completes the proof.
\end{proof}


Next, we compute the clique number of those two graphs $\Gamma_{1,n}$ and $\Gamma^{un}_{2,n}$ for $n \geq 1.$
\begin{theorem}\label{clique thm}
(a) For any $d,$ $\omega (\Gamma_{1,n}) = 2$ for all $n \geq 1.$
\vskip 0.1in
(b) For any $d,$ $\omega(\Gamma^{un}_{2,n})=2n+1$ for all $n \geq 1.$
\end{theorem}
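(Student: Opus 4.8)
The plan is to read off the explicit isomorphism types of $\Gamma_{1,n}$ and $\Gamma^{un}_{2,n}$ from Remark \ref{planar ex}, and then combine the general inequality $\omega(\Gamma) \le \chi(\Gamma)$ with the chromatic numbers already established in Theorem \ref{chromatic thm}.

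For part (a), I would simply recall that $\Gamma_{1,n} \cong K_{2n,2n}$ for every $d$. The two colour classes of a complete bipartite graph are independent sets, so any clique meets each class in at most one vertex, whence no clique has more than two vertices; since $K_{2n,2n}$ does contain edges for $n \ge 1$, this forces $\omega(\Gamma_{1,n}) = 2$.

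For part (b), the upper bound is immediate: the inequality $\omega(\Gamma) \le \chi(\Gamma)$ together with Theorem \ref{chromatic thm}(b) gives $\omega(\Gamma^{un}_{2,n}) \le 2n+1$ for every $d$. For the matching lower bound I would exhibit an explicit clique of size $2n+1$ using the join structure. In each case the graph contains a copy of $K_{2n}$ joined (via $*$) to a nonempty independent part: when $d \ne 1,3$ we have $\Gamma^{un}_{2,n} \cong K_{2n} * (2nK_1)$; when $d=1$ we have $\Gamma^{un}_{2,n} \cong 2nK_1 * (K_{2n} \cup 4nK_1)$; and when $d=3$ the graph is a disjoint union of three copies of $K_{2n} * (2nK_1)$. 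In every case, taking the $2n$ mutually adjacent vertices of $K_{2n}$ and adjoining a single vertex from the independent part produces, by the definition of the join, a set of $2n+1$ pairwise adjacent vertices, i.e.\ a clique of size $2n+1$. Hence $\omega(\Gamma^{un}_{2,n}) \ge 2n+1$, and combined with the upper bound we conclude equality.

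The computation is essentially routine, so there is no serious obstacle; the only point requiring care is to verify that the lower-bound construction is uniform across the three values of $d$ and that the adjoined vertex is genuinely adjacent to all of $K_{2n}$, which is exactly what the join operation guarantees. For $d=3$ one additionally uses that the clique number of a disjoint union is the maximum of the clique numbers of its components, so nothing is lost by working inside a single copy of $K_{2n} * (2nK_1)$.
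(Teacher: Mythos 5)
Your proposal is correct and follows essentially the same route as the paper: the upper bound in (b) via $\omega \le \chi$ together with Theorem \ref{chromatic thm}, and the lower bound by adjoining one vertex of the independent join-factor to the $K_{2n}$. The only cosmetic difference is in (a), where you argue directly from the bipartition of $K_{2n,2n}$ while the paper again invokes $\omega \le \chi = 2$ plus the existence of an edge; both are immediate.
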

\begin{proof}
(a) Since it is well known that $\chi(G) \geq \omega(G)$ for any graph $G$, the desired assertion follows from Theorem \ref{chromatic thm}-(a), together with the observation that $\Gamma_{1,n}$ contains a $K_2$ for any $n \geq 1$.
\vskip 0.1in
(b) Again, as in part (a), we know that $\omega(\Gamma^{un}_{2,n}) \leq \chi(\Gamma^{un}_{2,n})=2n+1$ for any $d$ and $n \geq 1.$ Now, for any $d$ and $n \geq 1,$ $K_{2n}$ is a subgraph of $\Gamma^{un}_{2,n}$, and hence, $\omega(\Gamma^{un}_{2,n}) \geq 2n.$ But then by the definition of the join of two graphs, we can add at least (in fact, exactly) one more vertex from $2n K_1$ to $K_{2n}$ to obtain a $K_{2n+1}$ so that $\omega(\Gamma^{un}_{2,n}) \geq 2n+1,$ which in turn, implies that $\omega(\Gamma^{un}_{2,n})=2n+1,$ as desired.
\vskip 0.1in
This completes the proof.
\end{proof}

\begin{corollary}
For any $d,$ the finite graphs $\Gamma_{1,n}$ and $\Gamma^{un}_{2,n}$ are both perfect graphs.
\end{corollary}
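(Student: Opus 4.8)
The plan is to deduce perfection from the explicit structural descriptions already in hand, rather than verifying the defining condition $\chi(H)=\omega(H)$ directly for every induced subgraph $H$. The most economical route uses the fact, established just before this corollary, that $\Gamma_{1,n}$ and $\Gamma^{un}_{2,n}$ are cographs, together with the classical result that every cograph is perfect; one could simply cite this. However, since Remark \ref{planar ex} has already exhibited these graphs as iterated joins and disjoint unions of complete and edgeless graphs, I would prefer to give a short self-contained argument following exactly that decomposition.

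First I would record the two base facts: a complete graph $K_m$ and an edgeless graph $mK_1$ are perfect for every $m$, since any induced subgraph of $K_m$ is again complete (so $\chi=\omega$ equals its order) while any induced subgraph of $mK_1$ is again edgeless (so $\chi=\omega\le 1$). Next comes the key lemma: perfection is preserved under both the disjoint union $\sqcup$ and the join $*$. For the join, the crucial observation is that any induced subgraph $H$ of $G*G'$ splits as $H=H_1*H_2$, where $H_1=H\cap V(G)$ and $H_2=H\cap V(G')$ are induced in $G$ and $G'$ respectively, all cross-edges being present by definition of the join. Since every vertex of $H_1$ is adjacent to every vertex of $H_2$, one has $\chi(H)=\chi(H_1)+\chi(H_2)$ and $\omega(H)=\omega(H_1)+\omega(H_2)$; if $G$ and $G'$ are perfect, then $\chi(H_i)=\omega(H_i)$ for $i=1,2$, whence $\chi(H)=\omega(H)$. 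The disjoint-union case is analogous, with sums replaced by maxima: an induced subgraph of $G\sqcup G'$ has the form $H_1\sqcup H_2$ with $\chi(H_1\sqcup H_2)=\max(\chi(H_1),\chi(H_2))$ and $\omega(H_1\sqcup H_2)=\max(\omega(H_1),\omega(H_2))$.

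Finally I would apply Remark \ref{planar ex}: in every case the graphs in question are assembled from the base pieces by these two operations. Concretely, $\Gamma_{1,n}=K_{2n,2n}=(2nK_1)*(2nK_1)$, while $\Gamma^{un}_{2,n}$ is $K_{2n}*(2nK_1)$ for $d\ne 1,3$, is $2nK_1*(K_{2n}\cup 4nK_1)$ for $d=1$, and is a threefold disjoint union $(2nK_1*K_{2n})\cup(2nK_1*K_{2n})\cup(2nK_1*K_{2n})$ for $d=3$. In each of these, only joins and disjoint unions of copies of $K_{2n}$ and edgeless graphs appear, so iterating the base facts and the closure lemma yields perfection of both $\Gamma_{1,n}$ and $\Gamma^{un}_{2,n}$ for every $n\ge 1$ and every $d$.

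I expect no genuine obstacle here. The only point deserving care — and the one I would isolate as a lemma rather than assert — is the closure of perfection under the join, specifically the decomposition of an arbitrary induced subgraph of a join as a join of induced subgraphs of the two factors. This decomposition is precisely what upgrades the additivity identity $\chi(H)=\chi(H_1)+\chi(H_2)$ from the whole graph to all of its induced subgraphs, and hence is what makes the perfection argument work rather than merely reproving the global equalities $\chi(\Gamma_{1,n})=\omega(\Gamma_{1,n})$ and $\chi(\Gamma^{un}_{2,n})=\omega(\Gamma^{un}_{2,n})$ already obtained in Theorems \ref{chromatic thm} and \ref{clique thm}.
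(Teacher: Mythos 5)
Your proof is correct, and it reaches the conclusion by a cleaner, more modular route than the paper. The paper's own proof argues by contradiction separately for each of the three cases of $d$: it takes a hypothetical induced subgraph $\Gamma$ with $\chi(\Gamma)>\omega(\Gamma)$, observes that $\Gamma$ again has the form $(2n-m_1)K_1 * K_{2n-m_2}$ (or the analogous expression with a third parameter $m_3$ when $d=1$, or a threefold disjoint union when $d=3$), and then writes out explicit piecewise formulas for $\chi(\Gamma)$ and $\omega(\Gamma)$ in terms of $m_1,m_2,m_3$, checking that the two formulas coincide line by line. Your argument isolates the single fact that makes all of those computations succeed -- that an induced subgraph of a join $G*G'$ is itself a join $H_1*H_2$ of induced subgraphs of the factors, with $\chi$ and $\omega$ additive over joins and behaving as maxima over disjoint unions -- and packages it as a closure lemma, reducing everything to the trivially perfect base pieces $K_m$ and $mK_1$. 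This buys uniformity (no case split on $d$ beyond quoting the decompositions of Remark \ref{planar ex}), avoids the bookkeeping of the piecewise formulas, and makes transparent why the paper's explicit verifications had to work; it is also exactly the inductive content of the classical fact that cographs are perfect, which the paper could have cited given its own preceding theorem that these graphs are cographs. The one step you rightly flag as needing care -- the decomposition of an arbitrary induced subgraph of a join -- is precisely the step the paper uses implicitly when it asserts that removing vertices from $(2nK_1)*K_{2n}$ yields $(2n-m_1)K_1*K_{2n-m_2}$, so the two proofs rest on the same observation; yours simply states it once and reuses it.
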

\begin{proof}
The fact that $\Gamma_{1,n}$ is a perfect graph follows from the observations that $\Gamma_{1,n}=K_{2n,2n}$ and a complete bipartite graph is perfect. Now, we deal with the graph $\Gamma^{un}_{2,n}.$ We consider the following three cases.
\vskip 0.1in
(i) For $d \ne 1,3,$ we know that $\Gamma^{un}_{2,n} = (2n K_1)*K_{2n}.$ Now, suppose on the contrary that $\Gamma^{un}_{2,n}$ is not perfect. Then by definition, there exists an induced subgraph $\Gamma$ of $\Gamma^{un}_{2,n}$ such that $\chi (\Gamma) > \omega (\Gamma).$ Now, if $V(\Gamma)$ is obtained by removing $m_1$ vertices from $V(2n K_1)$ and $m_2$ vertices from $V(K_{2n}),$ then we see that $\Gamma = (2n-m_1)K_1 * K_{2n-m_2}.$ Then as in the proof of Theorems \ref{chromatic thm} and \ref{clique thm}, we obtain
$$\chi(\Gamma) = \begin{cases} 2n-m_2 & \mbox{if}~2n-m_1 =0 \\ 2n-m_2 +1 & \mbox{if}~2n-m_1 > 0 \end{cases}$$
and
$$\omega(\Gamma) = \begin{cases} 2n-m_2 & \mbox{if}~2n-m_1 =0 \\ 2n-m_2 +1 & \mbox{if}~2n-m_1 > 0 \end{cases},$$
which contradicts our assumption that $\chi(\Gamma) > \omega (\Gamma).$
\vskip 0.1in
(ii) For $d=1,$ we have $\Gamma^{un}_{2,n} = 2n K_1 * (K_{2n} \cup 4n K_1).$ Now, suppose on the contrary that $\Gamma^{un}_{2,n}$ is not perfect. Then by definition, there exists an induced subgraph $\Gamma$ of $\Gamma^{un}_{2,n}$ such that $\chi (\Gamma) > \omega (\Gamma).$ Now, if $V(\Gamma)$ is obtained by removing $m_1$ vertices from $V(2n K_1)$, $m_2$ vertices from $V(K_{2n}),$ and $m_3$ vertices from the remaining $4nK_1$, then we see that $\Gamma = (2n-m_1)K_1 * (K_{2n-m_2} \cup (4n-m_3)K_1).$ Then as in the proof of Theorems \ref{chromatic thm} and \ref{clique thm}, we obtain
$$\chi(\Gamma) = \begin{cases} 0 & \mbox{if}~2n-m_1 = 2n-m_2 = 4n-m_3 = 0 \\ 1 & \mbox{if}~2n-m_1 = 2n-m_2 =0, 4n-m_3 > 0 \\  2n-m_2 & \mbox{if}~2n-m_1 = 0, 2n-m_2 > 0 \\ 1 & \mbox{if}~2n-m_1 > 0, 2n-m_2 = 4n-m_3 = 0  \\ 2 & \mbox{if}~2n-m_1 > 0, 2n-m_2 = 0, 4n-m_3 > 0  \\ 2n-m_2+1 & \mbox{if}~2n-m_1 > 0, 2n-m_2 >0 \end{cases}$$
and
$$\omega(\Gamma) = \begin{cases} 0 & \mbox{if}~2n-m_1 = 2n-m_2 = 4n-m_3 = 0 \\ 1 & \mbox{if}~2n-m_1 = 2n-m_2 =0, 4n-m_3 > 0 \\  2n-m_2 & \mbox{if}~2n-m_1 = 0, 2n-m_2 > 0 \\ 1 & \mbox{if}~2n-m_1 > 0, 2n-m_2 = 4n-m_3 = 0  \\ 2 & \mbox{if}~2n-m_1 > 0, 2n-m_2 = 0, 4n-m_3 > 0  \\ 2n-m_2+1 & \mbox{if}~2n-m_1 > 0, 2n-m_2 >0 \end{cases},$$
which contradicts our assumption that $\chi(\Gamma) > \omega (\Gamma).$
\vskip 0.1in
(iii) For $d=3,$ we recall that $\Gamma^{un}_{2,n}=((2n K_1)* K_{2n}) \cup ((2n K_1)* K_{2n}) \cup ((2n K_1)* K_{2n})$. For ease of notation, let $\Gamma^j$ be the $j$-th $(2n K_1)*K_{2n}$ in $\Gamma^{un}_{2,n}$ for $j=1,2,3.$ Then note that, for any induced subgraph $\Gamma$ of $\Gamma^{un}_{2,n},$ $\Gamma$ is a disjoint union of $\Gamma \cap \Gamma^1$, $\Gamma \cap \Gamma^2$, and $\Gamma \cap \Gamma^3$ (i.e.\ we have $\Gamma = (\Gamma \cap \Gamma^1 ) \cup (\Gamma \cap \Gamma^2) \cup (\Gamma \cap \Gamma^3)$), and hence, it follows that 
$$\chi (\Gamma)= \max(\chi(\Gamma \cap \Gamma^1), \chi(\Gamma \cap \Gamma^2), \chi(\Gamma \cap \Gamma^3)) = \max(\omega(\Gamma \cap \Gamma^1), \omega(\Gamma \cap \Gamma^2), \omega(\Gamma \cap \Gamma^3)) = \omega (\Gamma),$$
where the equality in the middle follows from a similar argument as in the case (i) above, showing that $\chi (\Gamma \cap \Gamma^j)=\omega (\Gamma \cap \Gamma^j)$ for each $j=1,2,3$.
\vskip 0.1in
This completes the proof.
\end{proof}
The next result is related to a maximum independent set of the graphs $\Gamma_{1,n}$ and $\Gamma_{2,n}^{un}$ for each $n \geq 1.$
\begin{theorem}\label{independence thm}
(a) For any $d,$ $\alpha (\Gamma_{1,n})=2n$ for all $n \geq 1.$
\vskip 0.1in
(b) For $d \ne 1, 3,$ $\alpha (\Gamma^{un}_{2,n}) = 2n$ for any $n \geq 1.$
\vskip 0.1in
(c) For $d = 1,$ $\alpha (\Gamma^{un}_{2,n}) = 4n+1$ for any $n \geq 1.$
\vskip 0.1in
(d) For $d = 3,$ $\alpha (\Gamma^{un}_{2,n}) = 6n$ for any $n \geq 1.$
\end{theorem}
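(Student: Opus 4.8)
The plan is to read off each independence number directly from the explicit structural descriptions of $\Gamma_{1,n}$ and $\Gamma^{un}_{2,n}$ recorded in Remark \ref{planar ex}, combined with two elementary facts about how the independence number behaves under the join and the disjoint union. Specifically, I will use that for disjoint graphs $G$ and $H$, an independent set in the join $G * H$ cannot contain vertices from both $G$ and $H$ (every vertex of $G$ is adjacent to every vertex of $H$), so that $\alpha(G * H) = \max(\alpha(G), \alpha(H))$; and that $\alpha(G \cup H) = \alpha(G) + \alpha(H)$ for a disjoint union. I will also use the trivial values $\alpha(K_m) = 1$ and $\alpha(m K_1) = m$.

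For part (a), since $\Gamma_{1,n} = K_{2n,2n}$ by Remark \ref{planar ex}, and a maximum independent set in a complete bipartite graph is exactly one of its two parts, I get $\alpha(\Gamma_{1,n}) = 2n$ immediately.

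For the remaining parts I treat the three values of $d$ separately, using the descriptions of $\Gamma^{un}_{2,n}$ in Remark \ref{planar ex}. For (b) with $d \ne 1,3$, we have $\Gamma^{un}_{2,n} = K_{2n} * (2n K_1)$, whence $\alpha = \max(\alpha(K_{2n}), \alpha(2nK_1)) = \max(1, 2n) = 2n$. For (c) with $d = 1$, we have $\Gamma^{un}_{2,n} = 2nK_1 * (K_{2n} \cup 4n K_1)$; applying the disjoint-union formula to the outer factor gives $\alpha(K_{2n} \cup 4nK_1) = 1 + 4n$, so the join formula yields $\alpha = \max(2n,\, 4n+1) = 4n+1$. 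For (d) with $d = 3$, the graph $\Gamma^{un}_{2,n}$ is a disjoint union of three copies of $2nK_1 * K_{2n}$, each of independence number $2n$ by the computation in (b), so $\alpha = 3 \cdot 2n = 6n$.

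Since every step reduces to applying the join/union formulas to the known building blocks, there is no genuine obstacle; the only point that deserves a line of justification is the $d=1$ case, where I must confirm that the independent set realizing $4n+1$ (one vertex of the clique $K_{2n}$ together with all of the isolated vertices $4nK_1$) beats the alternative of taking the entire outer $2nK_1$. This holds precisely because $4n+1 > 2n$ for every $n \geq 1$, which is exactly the hypothesis under which the stated value is claimed.
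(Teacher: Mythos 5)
Your proof is correct and follows essentially the same route as the paper: both read the answer off the explicit descriptions of $\Gamma_{1,n}$ and $\Gamma^{un}_{2,n}$ in Remark \ref{planar ex} and use that an independent set in a join lies entirely on one side while independence numbers add over disjoint unions. The only cosmetic difference is that you state the formulas $\alpha(G*H)=\max(\alpha(G),\alpha(H))$ and $\alpha(G\cup H)=\alpha(G)+\alpha(H)$ up front, whereas the paper argues each case ad hoc with a maximum independent set $S$.
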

\begin{proof}
(a) For any $d,$ since $\Gamma_{1,n} = K_{2n,2n},$ we have $\alpha(\Gamma_{1,n})=\alpha(K_{2n,2n})=2n$.
\vskip 0.1in
(b) If $d \ne 1, 3,$ then since $\Gamma^{un}_{2,n}=K_{2n}*(2n K_1)$, it suffices to compute $\alpha(K_{2n}*(2n K_1)).$ Let $S$ be a maximum independent set of $K_{2n} * (2n K_1).$ We observe that if $S$ contains any vertex from $K_{2n},$ then we have $|S|=1.$ In particular, by the definition of a maximum independent set, $S$ must not contain any vertex from $K_{2n},$ and hence, we have $|S| \leq 2n.$ On the other hand, since any two distinct vertices in $2n K_1$ are not adjacent to each other, we also have $|S| \geq 2n.$ It follows that we have $|S|=2n.$
\vskip 0.1in
(c) If $d = 1$, then we recall that $\Gamma^{un}_{2,n}= 2nK_1 * (K_{2n} \cup 4n K_1)$. Again, let $S$ be a maximum independent set of $2n K_1 * (K_{2n} \cup 4n K_1).$ As in the proof of part (b), we can see that $|S| \geq \max (2n, 4n+1)=4n+1,$ where $4n+1$ comes from the cardinality of a maximum independent set $T$ of $K_{2n} \cup 4n K_1,$ with the property that $S$ contains $T.$ Now, if $S \ne T,$ then we must add at least one vertex from $2n K_1$ to $T$ to get $S$, which is impossible because those added vertices from $2n K_1$ are adjacent to each vertex in $T.$ It follows that $S=T,$ and hence, we get $|S|=4n+1.$
\vskip 0.1in
(d) If $d = 3,$ then since $\Gamma^{un}_{2,n}=(K_{2n}*(2n K_1)) \cup (K_{2n}*(2n K_1)) \cup (K_{2n}*(2n K_1))$, the desired result essentially follows from part (b).
\vskip 0.1in
This completes the proof.
\end{proof}

Finally, we also compute the vertex connectivity of $\Gamma_{1,n}$ and $\Gamma^{un}_{2,n}$ for $n \geq 1.$
\begin{theorem}\label{connectivity thm}
(a) For any $d,$ $\kappa(\Gamma_{1,n})=2n$ for all $n \geq 1.$
\vskip 0.1in
(b) For $d \ne 3,$ $\kappa(\Gamma^{un}_{2,n})=2n$ for all $n \geq 1.$
\vskip 0.1in
(c) For $d=3,$ $\kappa(\Gamma_j)=2n$ for all $n \geq 1$ and for each $j=2,3,4.$ (For the definition of $\Gamma_j$, we refer to (Case 3) of Theorem \ref{main thm1}.)
\end{theorem}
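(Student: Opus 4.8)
The plan is to reduce all three parts to computing the vertex connectivity of a \emph{join} of two graphs, exploiting the explicit isomorphisms recorded in Remark \ref{planar ex}. Writing $mK_1$ for the empty graph on $m$ vertices, note that $\Gamma_{1,n} \cong K_{2n,2n} = (2n K_1) * (2n K_1)$, that $\Gamma^{un}_{2,n} \cong K_{2n} * (2n K_1)$ when $d \neq 1,3$, and that $\Gamma^{un}_{2,n} \cong (2n K_1) * (K_{2n} \cup 4n K_1)$ when $d = 1$. In every case the graph in question is a join $A * B$ in which each of the two factors $A, B$ has at least $2n$ vertices; I would prove the uniform statement $\kappa(A*B) = 2n$ under this hypothesis and then read off (a), (b), and (c).

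For the bound $\kappa \leq 2n$ I would produce an explicit separating set of size exactly $2n$ in each case. Deleting one of the two parts of $K_{2n,2n}$ leaves $2n$ mutually non-adjacent vertices; deleting the clique $K_{2n}$ from $K_{2n} * (2n K_1)$ leaves $2n K_1$; and deleting the independent factor $2n K_1$ from $(2n K_1) * (K_{2n} \cup 4n K_1)$ leaves the already-disconnected graph $K_{2n} \cup 4n K_1$. Since $n \geq 1$, each residual graph has at least two vertices and is disconnected, so each exhibited cut has size $2n$ and witnesses $\kappa \leq 2n$.

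For the matching bound $\kappa \geq 2n$ I would use the defining feature of the join $A * B$: every vertex of $A$ is adjacent to every vertex of $B$. Let $S$ be any set of vertices with $|S| < 2n$. Because $|V(A)| \geq 2n$ and $|V(B)| \geq 2n$, at least one vertex $a$ of $A$ and at least one vertex $b$ of $B$ survive the deletion of $S$, and $a$ is adjacent to $b$. Every surviving vertex of $A$ is adjacent to $b$, and every surviving vertex of $B$ is adjacent to $a$, so $(A * B) - S$ has diameter at most $2$; in particular it is connected. Hence no set of size $< 2n$ separates $A * B$, which gives $\kappa \geq 2n$, and together with the previous paragraph, $\kappa = 2n$. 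This settles (a) and (b). For (c), Remark \ref{planar ex}(3) shows that for $d = 3$ the level-$n$ induced subgraph of each component $\Gamma_j$ $(j = 2, 3, 4)$ from (Case 3) of Theorem \ref{main thm1} is isomorphic to $K_{2n} * (2n K_1)$, whose connectivity is $2n$ by the computation just performed.

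I expect the lower bound to be the only delicate point, and within it the case $d = 1$: there the block $4n K_1$ consists of vertices of degree exactly $2n$, so they would become isolated precisely if all $2n$ vertices of the factor $2n K_1$ were deleted. The argument therefore hinges on invoking the hypothesis $|V(A)|, |V(B)| \geq 2n$ explicitly, to guarantee that with fewer than $2n$ deletions a vertex on each side of the join always survives and keeps these low-degree vertices attached.
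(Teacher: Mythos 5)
Your proposal is correct and follows essentially the same route as the paper: exhibit the deletion of one whole join-factor as a $2n$-vertex cut, and observe that removing fewer than $2n$ vertices leaves at least one vertex on each side of the join, so the cross edges keep the residual graph connected (the paper merely asserts this connectedness; you justify it). One small caveat on your framing: $\kappa(A*B)=2n$ does not follow from $|V(A)|,|V(B)|\geq 2n$ alone (e.g.\ $K_{2n}*K_{2n}=K_{4n}$ has connectivity $4n-1$), but your case-by-case upper-bound cuts supply the needed extra hypothesis that one factor is disconnected after deletion of the other, so the argument as actually executed is sound.
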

\begin{proof}
(a) For any $d,$ we know that $\Gamma_{1,n}=K_{2n, 2n}.$ For notational convenience, let $V_1, V_2$ be the two subsets of $V(K_{2n,2n})$ such that every pair of vertices either in $V_1$ or $V_2$ is not adjacent to each other. If we remove all the vertices in $V_1$, then clearly, the remaining graph is disconnected. Hence, we have $\kappa(K_{2n,2n}) \leq 2n$, by definition. Now, suppose that $\kappa(K_{2n,2n}) < 2n.$ Then there is a vertex cut obtained by removing $m_1$ vertices from $V_1$ and $m_2$ vertices from $V_2$ with $m_1 + m_2 < 2n,$ that results in a disconnected graph. But then we note that the remaining graph becomes the complete bipartite graph $K_{2n-m_1, 2n-m_2}$ so that it is connected, which gives a contradiction. Hence, we can see that $\kappa(K_{2n,2n})=2n.$
\vskip 0.1in
(b) We consider the following two cases.
\vskip 0.1in
(i) For $d \ne 1,3,$ we know that $\Gamma^{un}_{2,n} = (2n K_1)*K_{2n}.$ For notational convenience, let $V_1=V(2nK_1)$ and $V_2=V(K_{2n})$ be the two subsets of $V(2nK_1 * K_{2n})$. If we remove all the vertices in $V_2$, then clearly, the remaining graph is disconnected. Hence, we have $\kappa((2nK_1) * K_{2n}) \leq 2n$, by definition. Now, suppose that $\kappa((2nK_1)*K_{2n}) < 2n.$ Then there is a vertex cut obtained by removing $m_1$ vertices from $V_1$ and $m_2$ vertices from $V_2$ with $m_1 + m_2 < 2n,$ that results in a disconnected graph. But then we note that the remaining graph becomes the graph $(2n-m_1)K_1 * K_{2n-m_2}$ so that it is connected, which gives a contradiction. Hence, we can see that $\kappa((2nK_1) * K_{2n})=2n.$

\vskip 0.1in
(ii) For $d=1,$ we have $\Gamma^{un}_{2,n} = 2n K_1 * (K_{2n} \cup 4n K_1).$ For notational convenience, let $V_1=V(2nK_1)$, $V_2=V(K_{2n})$, and $V_3 = V(4n K_1)$ be the three subsets of $V(2nK_1 * (K_{2n} \cup 4n K_1))$. If we remove all the vertices in $V_1$, then clearly, the remaining graph is disconnected. Hence, we have $\kappa(2n K_1 * (K_{2n} \cup 4n K_1)) \leq 2n$, by definition. Now, suppose that $\kappa(2n K_1 * (K_{2n} \cup 4n K_1)) < 2n.$ Then there is a vertex cut obtained by removing $m_1$ vertices from $V_1$, $m_2$ vertices from $V_2$, and $m_3$ vertices from $V_3$ with $m_1 + m_2 +m_3 < 2n,$ that results in a disconnected graph. But then we note that the remaining graph becomes the graph $(2n-m_1)K_1 * (K_{2n-m_2} \cup (4n-m_3) K_1)$ so that it is connected, which gives a contradiction. Hence, we can see that $\kappa(2nK_1 *( K_{2n} \cup 4n K_1) )=2n.$
\vskip 0.1in
(c) For $d=3,$ and for each $j=2,3,4,$ we know that $\Gamma_j = (2nK_1)*K_{2n},$ and hence, we get $\kappa(\Gamma_j)=2n$ for each $j=2,3,4$ by the same argument as in the proof of part (b)-(i) above.
\vskip 0.1in
This completes the proof.
\end{proof}
In particular, both of the graphs $\Gamma_{1,n}$ and $\Gamma^{un}_{2,n}$ (for $d=3,$ $\Gamma_2, \Gamma_3,$ and $\Gamma_4$) are $2$-connected for any $n \geq 1.$

\section{Automorphism groups and the Jordan property}\label{aut}
In this section, we recall some facts from the theory of Jordan groups, following \cite{5}, and determine whether the automorphism group of the zero-divisor graph of $M_2(R)$ for some $R=\mathcal{O}_K$ with $K=\Q(\sqrt{-d})$ either for $d=0$ or $d>0$ a square-free integer is a Jordan group or not. First, we recall the definitions of Jordan groups and Jordan constants.
 
\begin{definition}[{\cite[Definition 1]{5}}]\label{Jordan}
A group $G$ is called a \emph{Jordan group} if there exists an integer $d>0$, depending only on $G$, such that every finite subgroup $H$ of $G$ contains a normal abelian subgroup whose index in $H$ is at most $d.$ The minimal such an integer $d$ is called the \emph{Jordan constant of $G$} and is denoted by $J_{G}$.
\end{definition}
In particular, if we know the structure of finite subgroups of a given group $G$, then we can talk about the Jordaness of $G$, and potentially, we can further compute its Jordan constant $J_G$ in case $G$ is a Jordan group. For example, it is rather easy to see that if every finite subgroup of $G$ is abelian, then $G$ is a Jordan group, and in fact, we have $J_G =1.$
\vskip 0.1in
We summarize one useful observation in the following remark.
\begin{remark}\label{simple rmk}
Let $G$ be a finite group. Then by definition, $G$ is a Jordan group with $J_G \leq |G|.$ Moreover we have:
\vskip 0.1in
(1) $G$ is abelian if and only if we have $J_G=1.$ 
\vskip 0.1in
(2) If $G$ is simple i.e.\ the only normal subgroups of $G$ are the trivial group and $G$ itself, then we have $J_G = |G|.$

\end{remark}
The following example, which makes use of the simpleness of the alternating group $A_n$ for $n \geq 5$, will be used in the proof of Theorem \ref{main thm2} below.
\begin{example}\label{alt exam}
For $n \geq 5,$ we have $J_{A_n}=\frac{n!}{2}.$ Indeed, we know that $A_n$ is simple for any $n \geq 5$, and hence, we get $$J_{A_n}=  |A_n | = \frac{n!}{2}$$
by Remark \ref{simple rmk}-(2).
\end{example}
For our later use, we also record one basic property of the Jordaness of groups.
\begin{lemma}\cite[Theorem 3]{5}\label{sub jordan lem}
Let $G$ be a Jordan group, and let $H$ be a subgroup of $G.$ Then $H$ is also a Jordan group, and we have
$$J_H \leq J_G.$$
\end{lemma}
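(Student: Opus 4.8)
The plan is to exploit the single structural observation that every finite subgroup of $H$ is automatically a finite subgroup of $G$, so that the Jordan bound already granted to $G$ transfers verbatim to $H$ without any new construction. Since $G$ is a Jordan group, Definition \ref{Jordan} supplies an integer $d = J_G > 0$ with the property that every finite subgroup of $G$ contains a normal abelian subgroup of index at most $J_G$ in it. The whole proof will amount to applying this property to the finite subgroups of $H$.

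First I would fix an arbitrary finite subgroup $F \leq H$. Because $H \leq G$, we have $F \leq G$, and $F$ is finite, so $F$ is a finite subgroup of $G$. Applying the defining property of $J_G$ to $F$ itself produces a subgroup $A \trianglelefteq F$ that is abelian and satisfies $[F : A] \leq J_G$. The important point is that both the normality and the abelianness are asserted internally to $F$, so nothing needs to be checked about how $A$ sits inside the ambient group $G$.

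Next I would observe that the integer $J_G$ therefore witnesses the Jordan property for $H$: every finite subgroup $F$ of $H$ contains a normal abelian subgroup of index at most $J_G$. Hence $H$ is itself a Jordan group. Finally, since $J_H$ is by definition the minimal integer enjoying this witnessing property for $H$, and $J_G$ is one such integer, minimality forces $J_H \leq J_G$, which is exactly the claimed inequality.

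As for difficulty, there is essentially no obstacle here: the argument is a direct unwinding of Definition \ref{Jordan}. The only point that warrants a moment's care is confirming that the normal abelian subgroup produced by the Jordan property of $G$ is normal in $F$ rather than merely inside $G$ — but this is automatic, precisely because we invoke the definition with $F$ playing the role of a finite subgroup of $G$ in its own right, so the resulting conclusion is intrinsic to $F$.
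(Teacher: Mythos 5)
Your argument is correct: since every finite subgroup of $H$ is a finite subgroup of $G$, the constant $J_G$ already witnesses the Jordan property for $H$, and minimality gives $J_H \leq J_G$. The paper itself offers no proof of this lemma --- it is quoted as \cite[Theorem 3]{5} --- so there is nothing to compare against; your direct unwinding of Definition \ref{Jordan} is the standard and complete justification.
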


Now, we first compute the automorphism group of the induced subgraphs $\Gamma_{1,n}$ and $\Gamma^{un}_{2,n}$ of $\Gamma(M_2(R))$ for each $n \geq 1.$
\begin{theorem}\label{aut thm}
Let $G_{1,n}=\textrm{Aut}(\Gamma_{1,n})$ and $G_{2,n}=\textrm{Aut}(\Gamma^{un}_{2,n})$ for each $n \geq 1.$
\vskip 0.1in
(a) For any $d,$ $G_{1,n} \cong (S_{2n} \times S_{2n}) \rtimes C_2.$
\vskip 0.1in
(b) For $d \ne 1,3,$ $G_{2,n} \cong S_{2n} \times S_{2n}.$
\vskip 0.1in
(c) For $d=1,$ $G_{2,n} \cong S_{2n} \times S_{2n} \times S_{4n}.$
\vskip 0.1in
(d) For $d=3,$ $G_{2,n} \cong (S_{2n}^2 \times S_{2n}^2 \times S_{2n}^2) \rtimes S_3,$ where $S_{2n}^2 = S_{2n} \times S_{2n}.$ 
\end{theorem}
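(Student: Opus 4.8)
The plan is to read off the explicit graph structures from Remark \ref{planar ex} and then, for each part, to pin down the automorphism group by first exhibiting a canonical partition of the vertex set that every automorphism must respect, and then identifying exactly which permutations of each block extend to automorphisms. In every case the verification splits into two halves: a \emph{rigidity} step (no automorphism can mix the distinguished blocks) and an \emph{abundance} step (every block-preserving permutation really is an automorphism, and no further identification is forced).

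For part (a), recall that $\Gamma_{1,n} = K_{2n,2n}$. Since $K_{2n,2n}$ is connected and bipartite, its bipartition into two independent color classes $V_1, V_2$ (each of size $2n$) is unique up to interchanging $V_1$ and $V_2$; hence every automorphism either fixes both classes setwise or swaps them. An automorphism fixing both classes may permute $V_1$ and $V_2$ arbitrarily and independently, since every vertex of $V_1$ is joined to every vertex of $V_2$; this contributes the subgroup $S_{2n} \times S_{2n}$. Because $|V_1| = |V_2| = 2n$, there is in addition an automorphism swapping the two classes, generating a $C_2$ that acts on $S_{2n} \times S_{2n}$ by exchanging the two factors, which yields $G_{1,n} \cong (S_{2n} \times S_{2n}) \rtimes C_2$.

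For parts (b) and (c) the main tool is a degree count. In the graph $K_{2n} * (2nK_1)$ of part (b), the $2n$ vertices of $K_{2n}$ have degree $4n-1$ while the $2n$ vertices of $2nK_1$ have degree $2n$; since $n \geq 1$ forces $4n-1 \neq 2n$, the two blocks are distinguished by degree and therefore preserved by every automorphism. Each block may then be permuted freely and independently (the clique $K_{2n}$ admits all of $S_{2n}$, the independent set $2nK_1$ — all of whose vertices share the same neighborhood — admits all of $S_{2n}$, and the complete join between the blocks is preserved by any such pair), giving $G_{2,n} \cong S_{2n} \times S_{2n}$. For part (c) the same strategy applies to $2nK_1 * (K_{2n} \cup 4nK_1)$: the three natural blocks — the joining set $2nK_1$, the clique $K_{2n}$, and the free set $4nK_1$ — have pairwise distinct degrees $6n$, $4n-1$, $2n$ for every $n \geq 1$, so each is preserved, and since they may be permuted internally and independently (the clique and the free set being mutually non-adjacent but both completely joined to the first block), we obtain $G_{2,n} \cong S_{2n} \times S_{2n} \times S_{4n}$.

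For part (d), the graph $\Gamma^{un}_{2,n}$ is a disjoint union of three copies of the connected graph $H = 2nK_1 * K_{2n}$, which is exactly the graph treated in part (b), so $\textrm{Aut}(H) \cong S_{2n} \times S_{2n} = S_{2n}^2$. Any automorphism of a disjoint union of pairwise isomorphic connected graphs permutes the connected components and acts by an isomorphism on each, so $\textrm{Aut}(\Gamma^{un}_{2,n})$ is the wreath product $\textrm{Aut}(H) \wr S_3 = (S_{2n}^2)^3 \rtimes S_3$, which is the claimed $(S_{2n}^2 \times S_{2n}^2 \times S_{2n}^2) \rtimes S_3$. I expect the only genuine obstacle to be the bookkeeping needed to guarantee that the distinguishing invariants behave correctly for \emph{all} $n \geq 1$: namely, checking that the three degrees in part (c) are indeed distinct for every $n$, and verifying in part (a) that the equal block sizes $|V_1| = |V_2|$ really do produce an extra $C_2$ acting by exchange rather than a direct factor. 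Once the canonical partition is shown to be preserved, the remaining abundance check is routine.
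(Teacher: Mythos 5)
Your proposal is correct and follows essentially the same strategy as the paper's proof: establish that every automorphism preserves the canonical block decomposition (the paper does this via an adjacency contradiction where you use degree counts, which is an equivalent and equally valid rigidity argument), then observe that all block-preserving permutations are automorphisms, and handle part (d) by the standard component-permutation (wreath product) argument. The only cosmetic difference is that you supply a proof of the bipartite case (a) where the paper simply cites $\textrm{Aut}(K_{2n,2n}) \cong (S_{2n}\times S_{2n})\rtimes C_2$ as well known.
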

\begin{proof}
(a) This follows from the fact that $\Gamma_{1,n} = K_{2n, 2n}$ for any $d$ and $n \geq 1,$ and the well-known fact that $\textrm{Aut}(K_{2n,2n}) \cong (S_{2n} \times S_{2n}) \rtimes C_2.$
\vskip 0.1in
(b) For $d \ne 1,3,$ we know that $\Gamma^{un}_{2,n} = (2n K_1)*K_{2n}$ for each $n \geq 1.$ Note that $\textrm{Aut}(2n K_1)=\textrm{Aut}(K_{2n}) \cong S_{2n}.$ Now, for convenience, we let $V(2n K_1) = \{v_1, v_2, \cdots, v_{2n} \}$ and $V(K_{2n})=\{w_1, w_2, \cdots, w_{2n}\}.$ Then for any $\sigma, \tau \in S_{2n},$ we define a bijection $(\sigma, \tau) \colon V(2n K_1 * K_{2n}) \rightarrow V(2n K_1 * K_{2n})$ by
$$(\sigma, \tau)(x)=\begin{cases} v_{\sigma(j)} & \mbox{if $x=v_j$ for some $j \in \{1, 2, \cdots, 2n\}$} \\ w_{\tau(j)} & \mbox{if $x=w_j$ for some $j \in \{1, 2, \cdots, 2n\}$} \end{cases}.$$
We claim that $(\sigma, \tau)$ is an automorphism of the graph $(2n K_1)* K_{2n}.$ Indeed, let $u, v \in V(2n K_1 * K_{2n})$, and we consider the following three cases:
\vskip 0.1in
(i) If $u, v  \in V(2n K_1),$ so that $u=v_j$ and $v=v_k$ for some $j, k \in \{1, 2, \cdots, 2n\}$, then we have $(\sigma, \tau)(u)=v_{\sigma(j)}$ and $(\sigma, \tau)(v)=v_{\sigma(k)}$ so that $u$ and $v$ (resp.\ $(\sigma, \tau)(u)$ and $(\sigma, \tau)(v)$) are not adjacent to each other at the same time.
\vskip 0.1in
(ii) If exactly one of $u$ and $v$ is in $V(2n K_1),$ then without loss of generality, assume further that $u  \in V(2nK_1)$ and $v \in V(K_{2n}).$ Then we may write $u=v_j$ and $v=w_k$ for some $j, k \in \{1,2,\cdots, 2n\}.$ It follows then that we have $(\sigma, \tau)(u)=v_{\sigma(j)}$ and $(\sigma, \tau)(v)=w_{\tau(k)}$, and hence, we see that $u$ and $v$ (resp.\ $(\sigma, \tau)(u)$ and $(\sigma, \tau)(v)$) are adjacent to each other at the same time.
\vskip 0.1in
 (iii) If $u, v \in V(K_{2n}),$ so that $u=w_j$ and $v=w_k$ for some $j, k \in \{1,2,\cdots, 2n\},$ then we have $(\sigma, \tau)(u)=w_{\tau(j)}$ and $(\sigma, \tau)(v)=w_{\tau(k)}$ so that $u$ and $v$ (resp.\ $(\sigma, \tau)(u)$ and $(\sigma, \tau)(v)$) are adjacent to each other at the same time due to the fact that $K_{2n}$ is complete.

\vskip 0.1in
Hence, it follows from (i), (ii), and (iii) that for any $u, v \in V(2nK_1 * K_{2n}),$ we have that $u$ is adjacent to $v$ if and only if $(\sigma, \tau)(u)$ is adjacent to $(\sigma, \tau)(v),$ and hence, we conclude that $(\sigma, \tau)$ is an automorphism of $(2nK_1)* K_{2n},$ and this construction shows that the map
$$\varphi \colon S_{2n} \times S_{2n} \rightarrow \textrm{Aut}((2nK_1) * K_{2n})$$
given by $\varphi((\sigma, \tau))= (\sigma, \tau)$ (where the latter denotes the automorphism of $(2nK_1)*K_{2n}$ described above) is well-defined and injective. Now, it remains to show that $\varphi$ is surjective. To this aim, let $f \in \textrm{Aut}(2nK_1 * K_{2n}).$ We first show that $f(2nK_1) = 2nK_1$ and $f(K_{2n})=K_{2n}.$ Indeed, if $f(v_j)=w_k$ for some $j, k \in \{1,2,\cdots, 2n\},$ then it follows that $f(v_l)$ is adjacent to $f(v_j)=w_k$ for any $l \ne j$ (by definition), and then, since $f$ is an automorphism of $(2nK_1) * K_{2n}$, it implies that $v_l$ is adjacent to $v_j,$ which gives a contradiction. Thus $f(2nK_1)=2nK_1$ and $f(K_{2n})=K_{2n},$ as desired. Now, in view of the above observation, consider the natural restriction maps $f|_{2n K_1}$ and $f|_{K_{2n}}$ of $f$. Then by definition, it is clear that $f|_{2nK_1} \in \textrm{Aut}(2nK_1) \cong S_{2n}$ and $f|_{K_{2n}} \in \textrm{Aut}(K_{2n}) \cong S_{2n}.$ By construction, we have $\varphi(f|_{2nK_1}, f|_{K_{2n}})=f,$ and this proves the surjectivity of $\varphi,$ as desired. Hence, we conclude that $\textrm{Aut}(2nK_1 * K_{2n}) \cong S_{2n} \times S_{2n}.$
\vskip 0.1in
(c) For $d=1,$ we recall that $\Gamma^{un}_{2,n} = 2n K_1 * (K_{2n} \cup 4n K_1)$ for each $n \geq 1.$ Note that $\textrm{Aut}(2n K_1)=\textrm{Aut}(K_{2n}) \cong S_{2n}$ and $\textrm{Aut}(4nK_1) \cong S_{4n}.$ Now, for convenience, we let $V(2n K_1) = \{u_1, u_2, \cdots, u_{2n} \}$ and $V(K_{2n})=\{v_1, v_2, \cdots, v_{2n}\}$, and $V(4nK_1)=\{w_1, w_2, \cdots, w_{4n}\}.$ Then for any $\sigma, \tau \in S_{2n}$ and $\eta \in S_{4n},$ we define a bijection $(\sigma, \tau, \eta) \colon V(2n K_1 *( K_{2n} \cup 4n K_1)) \rightarrow V(2n K_1 *( K_{2n} \cup 4n K_1))$ by
$$(\sigma, \tau, \eta)(x)=\begin{cases} u_{\sigma(j)} & \mbox{if $x=u_j$ for some $j \in \{1, 2, \cdots, 2n\}$} \\ v_{\tau(j)} & \mbox{if $x=v_j$ for some $j \in \{1, 2, \cdots, 2n\}$} \\ w_{\eta(j)} & \mbox{if $x=w_j$ for some $j \in \{1,2,\cdots, 4n\}$} \end{cases}.$$
Then we can proceed as in the proof of part (b) above to show that $$\textrm{Aut}(2nK_1 * (K_{2n} \cup 4n K_1)) \cong S_{2n} \times S_{2n} \times S_{4n}.$$

(d) For $d=3,$ we recall that $\Gamma^{un}_{2,n}=(2n K_1)* K_{2n} \cup (2n K_1)* K_{2n} \cup (2n K_1)* K_{2n}.$ For ease of notation, as before, let $\Gamma^j$ be the $j$-th $(2n K_1)*K_{2n}$ in $\Gamma^{un}_{2,n}$ for $j=1,2,3.$ Then by first permuting $\Gamma^1, \Gamma^2,$ and $\Gamma^3,$ and then,  by applying the automorphisms within each $\Gamma^1, \Gamma^2,$ and $\Gamma^3$ (using a similar argument as in the proof of part (b)), we obtain
$$\textrm{Aut}(\Gamma^{un}_{2,n}) \cong (S_{2n}^2 \times S_{2n}^2 \times S_{2n}^2) \rtimes S_3,$$
as desired.
\vskip 0.1in
This completes the proof.
\end{proof}

We conclude this paper by proving the non-Jordaness of the automorphism group of the zero-divisor graph of $M_2(R)$. 
\begin{theorem}\label{main thm2}
Let $G= \textrm{Aut}(\Gamma(M_2 (R))).$ Then $G$ is not a Jordan group.
\end{theorem}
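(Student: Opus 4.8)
The plan is to show that $G=\textrm{Aut}(\Gamma(M_2(R)))$ contains subgroups isomorphic to $A_n$ for arbitrarily large $n$, and then derive a contradiction with the Jordan property. The strategy rests on Lemma \ref{sub jordan lem}: if $G$ were a Jordan group with Jordan constant $J_G$, then every subgroup $H$ of $G$ would be Jordan with $J_H \leq J_G$. But by Example \ref{alt exam}, the alternating group $A_n$ is simple for $n \geq 5$ and so $J_{A_n} = n!/2$, which grows without bound. Hence exhibiting $A_n \hookrightarrow G$ for all $n$ forces $J_G \geq n!/2$ for all $n$, which is impossible for a fixed integer $J_G$.

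First I would produce, for each $n$, an induced subgraph of $\Gamma(M_2(R))$ on which the symmetric group $S_n$ acts as graph automorphisms. The natural candidate comes from the analysis already carried out: inside $S_{\textrm{tc},2}(R)$ (or $S_{\textrm{tc},1,0}(R)$) there is an infinite family of vertices of the form $\begin{bmatrix} 0 & 0 \\ 0 & \lambda \end{bmatrix}$ for $\lambda \in R \setminus \{0\}$, and by Theorem \ref{main thm1} these all sit in the complete bipartite component $\Gamma_1$ together with the vertices $\begin{bmatrix} \lambda & 0 \\ 0 & 0 \end{bmatrix}$. Pick $n$ distinct vertices on one side of this bipartition; since any two vertices on the same side of $K_{m,m}$ are interchangeable by a graph automorphism that fixes everything else, every permutation of these $n$ chosen vertices (extended by the identity on all remaining vertices of $\Gamma(M_2(R))$) is a well-defined automorphism of the whole graph $\Gamma(M_2(R))$. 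This realizes $S_n$, and hence $A_n$, as a subgroup of $G$ for every $n \geq 1$.

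The key technical point I would verify carefully is that such a permutation of finitely many mutually ``equivalent'' vertices really does extend to an automorphism of the entire infinite graph and not merely of the finite induced subgraph. This is where one must check that the chosen vertices have identical neighborhoods in $\Gamma(M_2(R))$ \emph{relative to the rest of the graph}: two vertices $u,v$ can be transposed by an automorphism fixing all other vertices precisely when they are twins, i.e.\ they are adjacent to exactly the same set of other vertices (and to each other or not, consistently). I would argue that the vertices $\begin{bmatrix} 0 & 0 \\ 0 & \lambda \end{bmatrix}$ for varying $\lambda \in R \setminus \{0\}$ are pairwise twins in $\Gamma(M_2(R))$, since left/right multiplication conditions $AB = 0$ or $BA = 0$ depend on $\lambda$ only through its being a nonzero scalar and not through its actual value. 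Granting this, the transposition swapping two such vertices and fixing everything else is an automorphism, and products of transpositions give all of $S_n$.

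I expect the main obstacle to be exactly this twin verification over the full infinite graph: one must confirm that scaling the entry $\lambda$ does not change the adjacency relation to \emph{any} vertex of $\Gamma(M_2(R))$, including vertices far outside $S_{\textrm{tc}}(R)$. Once the embedding $A_n \hookrightarrow G$ is established for all $n$, the conclusion is immediate. Suppose for contradiction that $G$ is Jordan with constant $J_G$. Choose $n$ with $n \geq 5$ and $n!/2 > J_G$. By Lemma \ref{sub jordan lem} applied to the subgroup $A_n \leq G$, we would have $J_{A_n} \leq J_G$; but $J_{A_n} = n!/2 > J_G$ by Example \ref{alt exam}, a contradiction. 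Therefore $G$ is not a Jordan group, which is the desired statement.
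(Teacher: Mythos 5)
Your proposal is correct and follows essentially the same route as the paper: realize $S_n$ (hence $A_n$) inside $G$ by permuting finitely many pairwise twin vertices while fixing every other vertex of the infinite graph, and then contradict Lemma \ref{sub jordan lem} via $J_{A_{n}}=n!/2\to\infty$ from Example \ref{alt exam}. The only difference is cosmetic: you permute the independent twins $\begin{bmatrix} 0 & 0 \\ 0 & \lambda \end{bmatrix}$ in $S_{\textrm{tc},2}(R)$, whereas the paper permutes the mutually adjacent twins $\lambda \begin{bmatrix} 1 & -1 \\ 1 & -1 \end{bmatrix}$ in $S_{\textrm{tc},1,-1,n}$, and your twin verification (adjacency of such a vertex to an arbitrary $B\in M_2(R)$ depends on $\lambda$ only through $\lambda\ne 0$, because $R$ is an integral domain) is precisely the check the paper carries out in case (ii) of its proof.
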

\begin{proof}
We first claim that $S_{2n}$ is (isomorphic to) a subgroup of $G$ for each $n \geq 1.$ Indeed, consider the induced subgraph $\Gamma^{\prime}=K_{2n}$ of $\Gamma^{un}_{2,n}$ whose vertices lie on $S_{\textrm{tc},1,-1, n}$ in Definition \ref{level def}. Then we can construct automorphisms $\sigma$ of $\Gamma(M_2(R))$ as follows: let $\sigma \colon V(\Gamma(M_2(R))) \rightarrow V(\Gamma(M_2(R)))$ be a bijective map that permutes the $2n$ vertices in $\Gamma^{\prime}$ and fixes all the other vertices in $\Gamma(M_2(R)).$ Now, we need to show that $\sigma$ is an automorphism of the graph $\Gamma(M_2(R)).$ To this aim, let $v_1, v_2 \in V(\Gamma(M_2(R)))$, and we consider the following three cases:
\vskip 0.1in
(i) If $v_1, v_2 \not \in \Gamma^{\prime},$ then we have $\sigma(v_1)=v_1$ and $\sigma(v_2)=v_2$ so that $v_1$ is adjacent to $v_2$ if and only if $\sigma(v_1)$ is adjacent to $\sigma(v_2)$.
\vskip 0.1in
(ii) If exactly one of $v_1$ and $v_2$ is in $\Gamma^{\prime},$ then without loss of generality, assume further that $v_1 \not \in \Gamma^{\prime}$ and $v_2 \in \Gamma^{\prime}.$ Then we have $\sigma(v_1)=v_1$ and $\sigma(v_2) \in \Gamma^{\prime}.$ Then by the description of $\Gamma^{\prime}$ (see Definition \ref{level def} above), we may write $v_2 =  \lambda \begin{bmatrix} 1 & -1 \\ 1 & - 1 \end{bmatrix}$ and $\sigma(v_2) = \mu \begin{bmatrix}1 & -1 \\ 1 & - 1 \end{bmatrix}$ for some $\lambda, \mu \in \Z \setminus \{0\}$. Now, note that
$$ v_1 v_2 = \lambda v_1 \cdot \begin{bmatrix} 1 & -1 \\ 1 & -1 \end{bmatrix} = O~~\textrm{if and only if}~~\sigma(v_1)\sigma(v_2) = v_1 \sigma(v_2) = \mu v_1 \cdot \begin{bmatrix} 1 & -1 \\ 1 & -1 \end{bmatrix} = O.$$
This means that $v_1$ and $v_2$ are adjacent if and only if $\sigma(v_1)$ and $\sigma(v_2)$ are adjacent.
\vskip 0.1in
 (iii) If $v_1, v_2 \in \Gamma^{\prime},$ then $\sigma(v_1), \sigma(v_2) \in \Gamma^{\prime}.$ Then since $\Gamma^{\prime} = K_{2n}$ is complete, we can see that $v_1$ and $v_2$ are adjacent if and only if $\sigma(v_1)$ and $\sigma(v_2)$ are adjacent.
\vskip 0.1in
Hence, it follows from (i), (ii), and (iii) that $\sigma$ is an automorphism of $\Gamma(M_2(R)),$ and this construction shows that $S_{2n}$ is (isomorphic to) a subgroup of $G$, as desired.  
\vskip 0.1in

Now, suppose on the contrary that $G$ is a Jordan group. Then for any $d$, since $S_{2n} \leq G$ for any $n \geq 1,$ we further have that $A_{2n} \leq G$ for any $n \geq 1.$ Then it follows from Example \ref{alt exam} and Lemma \ref{sub jordan lem} that we have
$$ J_{G} \geq J_{A_{2n}} = \frac{(2n)!}{2} \rightarrow \infty~~\textrm{as}~n \rightarrow \infty,$$
(where we implicitly use the fact that $A_{2n}$ is simple for any $n \geq 3$), which is absurd, and hence, we conclude that $G$ cannot be a Jordan group. 
\vskip 0.1in
This completes the proof.
\end{proof}

\begin{remark}
There are some quite large infinite groups which turn out to be Jordan groups. Hence, it is not at all trivial to see that $\textrm{Aut}(\Gamma(M_2(R)))$ is not a Jordan group, a priori.
\end{remark}
Motivated by Theorem \ref{main thm2}, we conclude this paper with the following question.

\begin{problem}\label{aut prob}
For which commutative rings $R$ with identity, does there exist an integer $n_0 \geq 1$ such that $\textrm{Aut}(\Gamma(M_2(R)))$ contains an $S_n$ (as a subgroup) for all $n \geq n_0?$
\end{problem}
Note that if such an $n_0$ exists for a commutative ring $R$ with identity, then $\textrm{Aut}(\Gamma(M_2(R)))$ is not Jordan.
\begin{remark}
If $R$ is a finite commutative ring with identity, then the vertex set $V(\Gamma(M_2(R)))$ is also a finite set, say, with $|V(\Gamma(M_2(R)))| = m.$ Then it follows that $\textrm{Aut}(\Gamma(M_2(R)))$ cannot contain an $S_n$ (as a subgroup) for all $ n \geq m+1,$ and hence, there is no such an integer $n_0$ as in Problem \ref{aut prob}.
\end{remark}


\section{Conclusion}
In this paper, we constructed several induced subgraphs of the zero-divisor graph $\Gamma(M_2(R))$ of $M_2(R)$ for some ring $R$ of algebraic integers of a number field $K=\Q(\sqrt{-d})$ for either $d=0$ or $d>0$ a square-free integer. We examined some of the properties, and computed the automorphism group of those induced subgraphs of $\Gamma(M_2(R))$, and we further show that the automorphism group of $\Gamma(M_2(R))$ is not a Jordan group. These results are summarized in the following remark. 
\begin{remark}
Let $R$ be the ring of integers of a number field $K=\Q(\sqrt{-d})$ for either $d=0$ or $d>0$ a square-free integer. Let $\Gamma_{1,n}$ and $\Gamma_{2,n}$ be the induced subgraphs of $\Gamma(M_2(R))$ given in Definition \ref{subgraph def}. Then we have the following table.
  \begin{center}
  \begin{tabular}{c | c | c | c }
\hline
$$ & $d \ne 1, 3$ & $d=1$ &$d=3$  \\
\hline
$g(\Gamma_{1,n})$ & $4$ & $4$ &$4$ \\
\hline
$g(\Gamma^{un}_{2,n})$ & $3$ & $3$ &$3$ \\
\hline
$\chi(\Gamma_{1,n})$ & $2$ & $2$ &$2$ \\
\hline
$\chi(\Gamma^{un}_{2,n})$ & $2n+1$  & $2n+1$ &$2n+1$ \\
\hline
$\omega(\Gamma_{1,n})$ & $2$ & $2$ &$2$\\
\hline
$\omega(\Gamma^{un}_{2,n})$ & $2n+1$ & $2n+1$ &$2n+1$ \\
\hline
$\alpha(\Gamma_{1,n})$ & $2n$ & $2n$ &$2n$\\
\hline
$\alpha(\Gamma^{un}_{2,n})$ & $2n$ & $4n+1$ &$6n$ \\
\hline
$\kappa(\Gamma_{1,n})$ & $2n$ & $2n$ &$2n$\\
\hline
$\kappa(\Gamma^{un}_{2,n})$ & $2n$ & $2n$ &$2n$\\
\hline
$\textrm{Aut}(\Gamma_{1,n})$ & $(S_{2n} \times S_{2n}) \rtimes C_2$ & $(S_{2n} \times S_{2n}) \rtimes C_2$ &$(S_{2n} \times S_{2n}) \rtimes C_2$\\
\hline
$\textrm{Aut}(\Gamma^{un}_{2,n})$ & $S_{2n} \times S_{2n}$ & $S_{2n} \times S_{2n} \times S_{4n}$  &$(S_{2n}^2 \times S_{2n}^2 \times S_{2n}^2) \rtimes S_3 $\\
\hline
Is $\Gamma_{1,n}$ a cograph? & Yes & Yes & Yes \\
\hline
Is $\Gamma^{un}_{2,n}$ a cograph? & Yes & Yes & Yes \\
\hline
Is $\textrm{Aut}(\Gamma(M_2(R)))$ Jordan? & No & No & No \\
\hline
\end{tabular}
\vskip 4pt
\textnormal{Table 1} \\
\textnormal{Properties of the induced subgraphs $\Gamma_{1,n}$ and $\Gamma^{un}_{2,n}$, and their automorphism groups.}
\end{center}
\end{remark}

\textbf{Acknowledgments}
\vskip 0.1in
The authors are grateful to Dr. Minho Cho and Professor Hyungkee Yoo for their valuable comments on the paper.

\Addresses

\end{document}